\def\PMS{p}
\def\pmset{{\rm PM}}
\def\NZQ{\mathbb}               
\def\ZZ{{\NZQ Z}}
\def\RR{{\NZQ R}}
\def\frk{\mathfrak}               
\def\Phi{{\frk N}}
\def\eb{{\mathbf e}}
\def\xb{{\mathbf x}}
\def\opn#1#2{\def#1{\operatorname{#2}}} 
\opn\gr{gr}
\def\Hc{{\mathcal H}}
\def\Pc{{\mathcal P}}
\def\Qc{{\mathcal Q}}
\newtheorem{Theorem}{Theorem}[section]
\newtheorem{Lemma}[Theorem]{Lemma}
\newtheorem{Corollary}[Theorem]{Corollary}
\newtheorem{Proposition}[Theorem]{Proposition}
\theoremstyle{definition}
\newtheorem{Remark}[Theorem]{Remark}
\newtheorem{Example}[Theorem]{Example}
\let\epsilon\varepsilon
\let\phi=\varphi
\let\kappa=\varkappa
\opn\dis{dis}
\opn\height{height}
\opn\dist{dist}
\def\pnt{{\raise0.5mm\hbox{\large\bf.}}}
\opn\Lex{Lex}
\opn\conv{conv}
\begin{document}

\title{PQ-type adjacency polytopes of join graphs}
\author{Hidefumi Ohsugi and Akiyoshi Tsuchiya}
\address{Hidefumi Ohsugi,
	Department of Mathematical Sciences,
	School of Science,
	Kwansei Gakuin University,
	Sanda, Hyogo 669-1337, Japan} 
\email{ohsugi@kwansei.ac.jp}

\address{Akiyoshi Tsuchiya,
Graduate school of Mathematical Sciences,
University of Tokyo,
Komaba, Meguro-ku, Tokyo 153-8914, Japan} 
\email{akiyoshi@ms.u-tokyo.ac.jp}

\subjclass[2010]{05A15, 05C31, 52B12, 52B20}
\keywords{adjacency polytope, $h^*$-polynomial, interior polynomial, perfectly matchable set, join graph, root polytope}

\begin{abstract}
PQ-type adjacency polytopes $\nabla^{\rm PQ}_G$ are lattice polytopes arising from finite graphs $G$. There is a connection between $\nabla^{\rm PQ}_G$  and the engineering problem known as power-flow study, which models the balance of electric power on a network of power generation. 
In particular, the normalized volume of $\nabla^{\rm PQ}_G$ plays a central role.
In the present paper, we focus the case where $G$ is a join graph.
In fact, formulas of the $h^*$-polynomial and the normalized volume of $\nabla^{\rm PQ}_G$ of a join graph $G$ are presented.
Moreover, we give explicit formulas of the $h^*$-polynomial and the normalized volume of $\nabla^{\rm PQ}_G$ when $G$ is a complete multipartite graph or a wheel graph.
\end{abstract}

\maketitle

\section{Introduction}
A \textit{lattice polytope} $\Pc \subset \RR^n$ 
is a convex polytope all of whose vertices have integer coordinates. 
Its \textit{normalized volume}, ${\rm Vol}(\Pc)=\dim (\Pc)! {\rm vol}(\Pc)$ where ${\rm vol}(\Pc)$ is the relative volume of $\Pc$, is always a positive integer. To compute ${\rm Vol}(\Pc)$ is a fundamental but hard problem in polyhedral geometry.

Let $G$ be a simple graph on $[n]:=\{1,\ldots,n\}$ with edge set $E(G)$.
The \textit{PV-type adjacency polytope} $\nabla^{\rm PV}_G$ of $G$ is the lattice polytope which is the convex hull of
\[
\{ \pm (\eb_i-\eb_j) \in \RR^n : \{i,j\} \in E(G)\},
\]
where $\eb_i$ is the $i$-th unit coordinate vector in $\RR^n$.
The normalized volumes of PV-type adjacency polytopes have attracted much attention. 
In fact, the normalized volume of a PV-type adjacency polytope gives an upper bound on the number of possible solutions in the Kuramoto equations (\cite{CDM}), which models the behavior of interacting oscillators (\cite{Kuramoto}).
For several classes of graphs, explicit formulas for the normalized volume of their PV-type adjacency polytopes have been given (e.g., \cite{Ardila,DDM, HJMsymmetric}).
In particular, we can compute the normalized volume of the PV-type adjacency polytope of a suspension graph by using interior polynomials (\cite{OTlocally}).
Here interior polynomials are a version of the Tutte polynomials for hypergraphs introduced by K\'{a}lm\'{a}n \cite{interior}.  
On the other hand,
the \textit{PQ-type adjacency polytope} $\nabla^{\rm PQ}_G$ of $G$ is the lattice polytope which is the convex hull of
\[
\{ (\eb_i,\eb_j) \in \RR^{2n} : \{i,j\} \in E(G) \mbox{ or } i=j\}.
\]
There is a connection between PQ-type adjacency polytopes and the engineering problem known as power-flow study, which models the balances of electric power on a network of power generation (\cite{CM}). 
In fact, the normalized volume of a PQ-type adjacency polytope gives an upper bound on the number of possible solutions in the algebraic power-flow equations. 
Davis and Chen \cite{DC} showed that the normalized volume of $\nabla^{\rm PQ}_G$ can be computed by using sequences of nonnegative integers related to the \textit{Dragon Marriage Problem} \cite{Postnikov}.

In the present paper, we focus on the $h^*$-polynomial of a PQ-type adjacency polytope.
Here, the $h^*$-polynomial $h^*(\Pc,x)$ of a lattice polytope $\Pc$ is a discrete tool to compute the normalized volume ${\rm Vol}(\Pc)$ (see Section \ref{sec:pre}).
We recall a relation between $\nabla^{\rm PQ}_G$ and a root polytope.
For a bipartite graph $H$ on $[n]$ with edge set $E(H)$, the \textit{root polytope} $\Qc_H$ of $H$ is the lattice polytope which is the convex hull of
\[
\{\eb_i+ \eb_j \in \RR^n: \{i,j\} \in E(H)\}.
\]
For a positive integer $n$, set $[\overline{n}]:=\{\overline{1},\ldots,\overline{n}\}$.
Define $D(G)$ to be the bipartite graph on $[n] \cup [\overline{n}]$ with edges $\{i, \overline{i}\}$ for each $i \in [n]$ and $\{i, \overline{j}\}$ and $\{\overline{i},j\}$ for each edge $\{i,j\}$ in $G$. 
It then follows that $\nabla^{\rm PQ}_G$ is unimodularly equivalent to $\Qc_{D(G)}$
(\cite[Lemma 2.4]{DC}).
On the other hand, it is known \cite{KalPos} that the $h^*$-polynomial of the root polytope $\Qc_H$ of a connected bipartite graph $H$ coincides with the interior polynomial $I_H(x)$ of the associated hypergraph of $H$.
In particular, the normalized volume of $\Qc_H$ is equal to  $|{\rm HT}(H)|$, where  ${\rm HT}(H)$ denotes the set of hypertrees of the associated hypergraph of $H$.
Therefore, we can compute the $h^*$-polynomial and the normalized volume of $\nabla^{\rm PQ}_G$ of a connected graph $G$ by using an interior polynomial and counting hypertrees.

The main result of the present paper is formulas of the $h^*$-polynomial and the normalized volume of $\nabla^{\rm PQ}_G$ of a join graph $G$. 
Let $G_1,\ldots,G_s$ be graphs with $m_1,\dots,m_s$ vertices.
Suppose that $G_i$ and $G_j$ have no common vertices for each $i \neq j$.
Then the {\em join} $G_1 + \cdots + G_s$ of $G_1,\ldots,G_s$ is obtained from $G_1 \cup \cdots \cup G_s$
joining each vertex of $G_i$ to each vertex of $G_j$ for any $i \neq j$.
Note that $G_1 + \cdots + G_s$ $(s>1)$ is connected and hence so is
$D(G_1 + \cdots + G_s)$.
For example, the complete bipartite graph $K_{\ell,m}$ is equal to the join $E_\ell + E_m$
where $E_k$ is the empty graph with $k$ vertices.
For complete graphs $K_\ell$ and $K_m$,
one has $K_\ell + K_m = K_{\ell+m}$.
We can compute the $h^*$-polynomial and the normalized volume of the PQ-type adjacency polytope of a join graph by using perfectly matchable set polynomials (see Section~\ref{sec:pre} for the definition of perfectly matchable set polynomials).

\begin{Theorem}
\label{mainthm}
Let $G_1,\ldots,G_s$ be graphs with $m_1,\dots,m_s$ vertices.
Suppose that $G_i$ and $G_j$ have no common vertices for each $i \neq j$.
Then for the join $G= G_1 + \cdots + G_s$ with  $m = \sum_{i=1}^s m_i$ vertices,
we have
\begin{eqnarray}
h^*(\nabla^{\rm PQ}_G ,x)
&=& \sum_{i=1}^s h^*(\nabla^{\rm PQ}_{G_i+K_{m-m_i}},x) -
(s-1)\sum_{k=0}^{m -1} 
\binom{m-1}{k}^2
x^k 
\\
&=& \sum_{i=1}^s I_{D(G_i+K_{m-m_i})}(x) -
(s-1)\sum_{k=0}^{m -1} 
\binom{m-1}{k}^2
x^k \label{main1}\\
&=&\sum_{i=1}^s 
\PMS (D(G_i+K_{m-m_i-1}), x)
-
(s-1)\sum_{k=0}^{m -1} 
\binom{m-1}{k}^2
x^k, \label{main2}
\end{eqnarray}
where $\PMS (H,x)$ denotes the perfectly matchable set polynomial of a graph $H$.
In particular, one has 
\begin{align*}
{\rm Vol}(\nabla^{\rm PQ}_G)&=\sum_{i=1}^s 
{\rm Vol}(\nabla^{\rm PQ}_{G_i+K_{m-m_i}})
-
(s-1)
\binom{2(m-1)}{m-1}\\&=\sum_{i=1}^s 
|{\rm HT}(D(G_i+K_{m-m_i}))|
-
(s-1)
\binom{2(m-1)}{m-1}\\
&=\sum_{i=1}^s 
 |{\rm PM}(D(G_i+K_{m-m_i-1}))|
-
(s-1)
\binom{2(m-1)}{m-1},
\end{align*}
where ${\rm PM}(H)$ denotes the set of perfectly matchable sets of a graph $H$.
\end{Theorem}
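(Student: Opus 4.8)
The plan is to work throughout with root polytopes via the unimodular equivalence $\nabla^{\rm PQ}_G \cong \Qc_{D(G)}$ of \cite[Lemma 2.4]{DC}, and to deduce the (unlabelled) first equality from an identity of indicator functions. Write $m=\sum_i m_i$, let $V_k\subseteq[m]$ be the vertex set of $G_k$, and set $P_k:=\Qc_{D(G_k+K_{m-m_k})}$ and $B:=\Qc_{K_{m,m}}$. Since $D(K_m)=K_{m,m}$, one has $B=\Delta_{[m]}\times\Delta_{[\overline{m}]}$, so $h^*(B,x)=\sum_{k}\binom{m-1}{k}^2x^k$ and ${\rm Vol}(B)=\binom{2(m-1)}{m-1}$. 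Because $G\subseteq G_k+K_{m-m_k}\subseteq K_m$ for every $k$, we get the nesting $\Qc_{D(G)}\subseteq P_k\subseteq B$. The target is the indicator identity
\begin{equation*}
\sum_{k=1}^s \mathbf{1}_{P_k}-(s-1)\,\mathbf{1}_{B}=\mathbf{1}_{\Qc_{D(G)}} .
\end{equation*}
Granting it, the first equality follows: the lattice-point enumerator $P\mapsto\#(tP\cap\ZZ^{2m})$ is a valuation, so the identity passes to Ehrhart polynomials, and since $D(G)$, each $D(G_k+K_{m-m_k})$, and $K_{m,m}$ are connected (each is $D$ of a join), all three polytopes are full-dimensional of dimension $2m-2$, whence the Ehrhart relation transfers verbatim to $h^*$-polynomials.

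The main tool is the transportation (Hall-type) description of a bipartite root polytope: for a bipartite graph $H$ on $[m]\sqcup[\overline{m}]$,
\begin{equation*}
\Qc_H=\bigl\{(u,v)\in\Delta_{[m]}\times\Delta_{[\overline{m}]} : u(S)\le v(N_H(S))\ \text{for all } S\subseteq[m]\bigr\},
\end{equation*}
where $u(S)=\sum_{a\in S}u_a$; this is exactly the feasibility criterion for a transportation problem with margins $u,v$ supported on $E(H)$. Specializing $H=D(G)$ gives the conditions $u(S)\le v(N_G[S])$ for all $S$, with $N_G[S]$ the closed neighbourhood; specializing $H=D(G_k+K_{m-m_k})$ gives, since every vertex outside $V_k$ is universal in $G_k+K_{m-m_k}$, that all conditions with $S\not\subseteq V_k$ are vacuous, so $P_k$ is cut out by $u(S)\le v\bigl(N_{G_k}[S]\cup([m]\setminus V_k)\bigr)$ for $S\subseteq V_k$ only; and $H=K_{m,m}$ recovers $B=\Delta_{[m]}\times\Delta_{[\overline{m}]}$.

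Two claims then give the indicator identity (points outside $B$ contribute $0$ to both sides since $P_k\subseteq B$). First, $\bigcap_k P_k=\Qc_{D(G)}$: for $a\in V_k$ one has $N_G[a]=N_{G_k}[a]\cup([m]\setminus V_k)$, so the inequalities of $P_k$ for $S\subseteq V_k$ are precisely those of $\Qc_{D(G)}$ for such $S$, while any $S$ meeting two distinct parts $V_k,V_l$ has $N_G[S]\supseteq([m]\setminus V_k)\cup([m]\setminus V_l)=[m]$ and imposes no constraint; hence the full system for $\Qc_{D(G)}$ is equivalent to the union of the single-part systems. Second, and this is the crux, every point of $B$ violates at most one $P_k$: if $(u,v)$ violated $P_k$ via $S\subseteq V_k$ and $P_l$ via $S'\subseteq V_l$ with $k\ne l$, rewrite the violations as $u(S)+v(V_k\setminus N_{G_k}[S])>1$ and $u(S')+v(V_l\setminus N_{G_l}[S'])>1$; adding and using $V_k\cap V_l=\emptyset$ gives $u(S\cup S')\le 1$ and $v\bigl((V_k\setminus N_{G_k}[S])\cup(V_l\setminus N_{G_l}[S'])\bigr)\le 1$, contradicting that the sum exceeds $2$. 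Together these show a point of $B$ lies in all $s$ of the $P_k$ exactly when it lies in $\Qc_{D(G)}$, and in exactly $s-1$ of them otherwise, which is the desired identity.

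Finally, equality \eqref{main1} is immediate from the first equality and \cite{KalPos}, which identifies $h^*(\Qc_{D(G_k+K_{m-m_k})},x)$ with $I_{D(G_k+K_{m-m_k})}(x)$ as each $D(G_k+K_{m-m_k})$ is connected; equality \eqref{main2} then follows from the relation between the interior polynomial and the perfectly matchable set polynomial (see Section~\ref{sec:pre}), which relates $I_{D(G_k+K_{m-m_k})}(x)$ to $\PMS(D(G_k+K_{m-m_k-1}),x)$. The volume statements are the specializations at $x=1$, using $h^*(\Pc,1)={\rm Vol}(\Pc)$, the Vandermonde identity $\sum_k\binom{m-1}{k}^2=\binom{2(m-1)}{m-1}$, and $I_H(1)=|{\rm HT}(H)|$, $\PMS(H,1)=|{\rm PM}(H)|$. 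I expect the only genuine obstacle to be the second claim above; everything else is bookkeeping with the Hall description and the valuation property.
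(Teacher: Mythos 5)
Your argument is correct, and it takes a genuinely different route from the paper. The paper never touches the polytopes geometrically: it works coefficient-by-coefficient with K\'alm\'an's description of the interior polynomial (Proposition~\ref{coef_interior}), introduces the sets $F_G(k)$ of admissible degree functions, and proves by induction on $s$ the graded decomposition $F_{K_m}(k)=F_{K_{m_1}+G_2}(k)\sqcup(F_{G_1+K_{m_2}}(k)\setminus F_G(k))$ (Lemma~\ref{bunkaibunkai}). The delicate part there is not the set-theoretic inclusion--exclusion (which matches your Claims A and B at the level of condition (ii)) but showing that the internal-activity statistic $\overline{\iota}(f)$ is the \emph{same} whether computed in $D(G)$, $D(G_i+K_{m-m_i})$, or $D(K_m)$; this forces the explicit spanning-tree surgery in Claims 1--4 of that lemma. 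You sidestep that statistic entirely: working with $\Qc_{D(G)}\subseteq P_k\subseteq B$ via the Gale--Hoffman feasibility description, your two claims (the intersection $\bigcap_k P_k=\Qc_{D(G)}$, and the averaging argument showing no point of $B$ can violate two distinct $P_k$'s) give the indicator identity $\sum_k\mathbf{1}_{P_k}-(s-1)\mathbf{1}_B=\mathbf{1}_{\Qc_{D(G)}}$ for all $s$ at once, with no induction, and the valuation property plus the common dimension $2m-2$ transfers it to $h^*$-polynomials. Both of your claims check out (the key inequality $u(S)+u(S')\le 1$ and $v(V_k\setminus N_{G_k}[S])+v(V_l\setminus N_{G_l}[S'])\le 1$ uses only disjointness of $V_k$ and $V_l$ and the simplex normalizations), and the two standard facts you invoke without proof --- the transportation description of $\Qc_H$ and $h^*(\Delta_{m-1}\times\Delta_{m-1},x)=\sum_k\binom{m-1}{k}^2x^k$ --- are indeed standard; the latter is exactly what the paper imports as $|F_{K_m}(k)|=\binom{m-1}{k}^2$ from \cite{KalPos}. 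The trade-off: the paper's route stays inside the interior-polynomial/hypertree framework used throughout (so the hypertree count falls out by construction), whereas your route is shorter, avoids the hardest lemma, and actually proves the stronger statement that the identity holds at the level of Ehrhart polynomials (indeed of indicator functions), from which \eqref{main1} and \eqref{main2} follow exactly as in the paper via \cite{KalPos}, Proposition~\ref{prop:matchinginterior} and Lemma~\ref{suspensionLemma}.
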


By using this theorem, we give explicit formulas of the $h^*$-polynomial and the normalized volume of $\nabla^{\rm PQ}_G$ when $G$ is a complete multipartite graph (Corollary \ref{mainCor}).
On the other hand, Theorem~\ref{mainthm} is not useful for computing the $h^*$-polynomial and the normalized volume of $\nabla^{\rm PQ}_{G}$ when $G$ is a wheel graph $W_n$, that is, $G$ is the join of a cycle $C_n$ and $K_1$.
We give explicit formulas for the $h^*$-polynomial and the normalized volume of $\nabla_{W_n}^{\rm PQ}$
and prove the conjecture \cite[Conjecture~4.4]{DC} on the normalized volume of
$\nabla_{W_n}^{\rm PQ}$ (Theorem \ref{thm:wheel}) by computing the perfectly matchable set polynomial of $D(C_n)$.

The paper is organized as follows:
After reviewing the definitions and properties of the $h^*$-polynomials of lattice polytopes and the interior polynomials of connected bipartite graphs in Section \ref{sec:pre}, we give a proof of Theorem~\ref{mainthm} in Section~\ref{sec:main}.
By using Theorem~\ref{mainthm}, explicit formulas of the $h^*$-polynomial and the normalized volume of the PQ-type adjacency polytope of a complete multipartite graph are presented in Section~\ref{sec:mult}.
Finally, we compute the $h^*$-polynomial and the normalized volume of the PQ-type adjacency polytope of a wheel graph in Section~\ref{sec:wheel}.

\section{preliminaries}
\label{sec:pre}

As explained in the previous section, the $h^*$-polynomial of $\nabla^{\rm PQ}_G$
is equal to the interior polynomial of $D(G)$.
First, we recall what $h^*$-polynomials are.
Let $\Pc \subset \RR^n$ be a lattice polytope of dimension $d$.
Given a positive integer $t$, we define
$$L_{\Pc}(t)=|t \Pc \cap \ZZ^n|,$$
where $t\Pc:=\{ t \xb \in \RR^n : \xb \in \Pc\}$.
The study on $L_{\Pc}(t)$ originated in Ehrhart \cite{Ehrhart} who proved that $L_{\Pc}(t)$ is a polynomial in $t$ of degree $d$ with the constant term $1$.
We call $L_{\Pc}(t)$ the \textit{Ehrhart polynomial} of $\Pc$.
The generating function of the lattice point enumerator, i.e., the formal power series
$$\text{Ehr}_\Pc(x)=1+\sum\limits_{k=1}^{\infty}L_{\Pc}(k)x^k$$
is called the \textit{Ehrhart series} of $\Pc$.
It is known that it can be expressed as a rational function of the form
$$\text{Ehr}_\Pc(x)=\frac{h^*(\Pc,x)}{(1-x)^{d+1}},$$
where $h^*(\Pc,x)$ is a polynomial in $x$ of degree at most $d$ with nonnegative integer coefficients and it
is called
the \textit{$h^*$-polynomial} (or the \textit{$\delta$-polynomial}) of $\Pc$. 
Moreover, $$h^*(\Pc,x)=\sum_{i=0}^{d} h_i^* x^i$$ satisfies $h^*_0=1$, $h^*_1=|\Pc \cap \ZZ^n|-(d +1)$ and $h^*_{d}=|{\rm int} (\Pc) \cap \ZZ^n|$, where ${\rm int} (\Pc)$ is the relative interior of $\Pc$.
Furthermore, $h^*(\Pc,1)=\sum_{i=0}^{d} h_i^*$ is equal to the normalized volume ${\rm Vol}(\Pc)$ of $\Pc$.
We refer the reader to \cite{BeckRobins} for the detailed information about Ehrhart polynomials and $h^*$-polynomials.

Next, we recall the definition of interior polynomials and their properties.
A {\em hypergraph} is a pair $\Hc = (V, E)$, where $E=\{e_1,\ldots,e_n\}$ is a finite multiset of non-empty subsets of $V=\{v_1,\ldots,v_m\}$. 
Elements of $V$ are called vertices and the elements of $E$ are the  hyperedges.
Then we can associate $\Hc$ to a bipartite graph ${\rm Bip} \Hc$
on the vertex set $V \cup E$ with the edge set
$\{ \{v_i, e_j\} : v_i \in e_j\}.$
Assume that ${\rm Bip} \Hc$ is connected.
A {\em hypertree} in $\Hc$ is a function $f: E \rightarrow \ZZ_{\ge 0}$
such that there exists a spanning tree $\Gamma$ of ${\rm Bip} \Hc$ 
whose vertices have degree $f (e) +1$ at each $e \in E$.
Then we say that $\Gamma$ induces $f$.
Let ${\rm HT}(\Hc)$ denote the set of all hypertrees in $\Hc$.
A hyperedge $e_j \in E$ is said to be {\em internally inactive}
with respect to the hypertree $f$ if there exists $j' < j$ such that
$g: E \rightarrow \ZZ_{\ge 0}$ defined by 
\[
 g(e_i) = \begin{cases}
 f(e_i)+1 & (i=j') \\
f(e_i)-1 & (i=j) \\
f(e_i) & \mbox{(otherwise)} 
  \end{cases}
\]
is a hypertree.
Let $\overline{\iota} (f) $ be the number of internally inactive hyperedges of $f$.
Then the {\em interior polynomial} of $\Hc$
is the generating function 
$I_\Hc (x) = \sum_{f \in {{\rm HT}(\Hc)}} x^{ \overline{\iota} (f) }$.
It is known \cite[Proposition~6.1]{interior} that $\deg I_\Hc (x) \le \min\{|V|,|E|\} - 1$.
If $G = {\rm Bip} \Hc$, then we set ${\rm HT}(G) = {\rm HT}(\Hc)$ and $I_G (x) = I_\Hc (x)$.
The coefficients of $I_G(x)$ are described as follows.

\begin{Proposition}[{\cite[Theorem 3.4]{interior}}]
\label{coef_interior}
Let $G$ be a connected bipartite graph on the vertex set $V_1 \sqcup V_2$
where $V_1=\{v_1,\ldots,v_p\}$ and $|V_2|=q$.
Then the coefficient of $x^k$ in $I_G(x)$ is
the number of functions $f: V_1 \rightarrow \ZZ_{\ge 0}$
such that 
\begin{itemize}
\item[{\rm (i)}]
$\displaystyle \sum_{i=1}^p f(v_i) = q -1${\rm ;}

\item[{\rm (ii)}]
$\displaystyle \sum_{v \in V'} f(v) \le |\Gamma_G(V')| -1$ for all $V' \subset V_1$,
where $\Gamma_G(S) \subset V_2$ is the set of vertices adjacent to some vertex in $S${\rm ;}

\item[{\rm (iii)}]
$\overline{\iota} (f) =k$, i.e., $|\eta_G(f)|=k$ where 
$\eta_G(f)$ is the set of vertices $v_j \in V_1$ satisfying the following condition{\rm :}
there exists $ j' < j $ such that
the function $g : V_1 \rightarrow \ZZ_{\ge 0}$ defined by
\[
  g(v_i) = \begin{cases}
    f(v_i)+1 & (i=j') \\
    f(v_i)-1 & (i=j) \\
    f(v_i) & (otherwise) 
  \end{cases}
\]
satisfies condition {\rm (ii)} above.
\end{itemize}
\end{Proposition}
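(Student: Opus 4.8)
The plan is to prove the statement in two stages. First I would show that conditions (i) and (ii) together characterize the hypertrees of the hypergraph $\Hc$ with $\mathrm{Bip}\,\Hc = G$, where $V_1$ is the hyperedge set and $V_2$ the vertex set, so that $p=|V_1|$ is the number of hyperedges and $q=|V_2|$ the number of vertices. Recall that $f\colon V_1\to\ZZ_{\ge 0}$ is a hypertree exactly when some spanning tree $\Gamma$ of $G$ satisfies $\deg_\Gamma(v)=f(v)+1$ for all $v\in V_1$. Second I would reconcile the definition of internal inactivity with condition (iii), so that the exponent $\overline\iota(f)$ appearing in $I_G(x)=\sum_{f\in\mathrm{HT}(\Hc)}x^{\overline\iota(f)}$ equals $|\eta_G(f)|$.

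For the forward implication of the first stage I would count edges. Since $G$ is bipartite, each edge of $\Gamma$ meets $V_1$ in exactly one vertex, so $\sum_{v\in V_1}(f(v)+1)$ is the number $p+q-1$ of edges of $\Gamma$, which gives (i); and for a nonempty $V'\subseteq V_1$ the edges of $\Gamma$ meeting $V'$ form a nonempty subforest whose vertex set lies in $V'\cup\Gamma_G(V')$, hence has at most $|V'|+|\Gamma_G(V')|-1$ edges, and subtracting $|V'|$ yields (ii). (Connectivity of $G$ gives $\Gamma_G(V_1)=V_2$, so $V'=V_1$ recovers the equality in (i).)

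The converse — building a spanning tree with prescribed $V_1$-degrees out of (i) and (ii) — is the crux, and I would obtain it from the matroid intersection theorem applied to the graphic matroid $M_1$ of $G$ and the partition matroid $M_2$ on $E(G)$ whose blocks are the stars at the vertices of $V_1$, with the star at $v$ capped at $f(v)+1$. Any common independent set of size $p+q-1$ is then a spanning tree realizing the degrees exactly, because (i) forces $\sum_{v}(f(v)+1)=p+q-1$ and so saturates every cap. By matroid intersection such a set exists iff $r_1(A)+r_2(E(G)\setminus A)\ge p+q-1$ for every $A\subseteq E(G)$; reducing to $A$ a union of stars over some $V'\subseteq V_1$, one has $r_2(E(G)\setminus A)=\sum_{v\notin V'}(f(v)+1)$ (using the singleton case of (ii)) and $r_1(A)=|V'|+|\Gamma_G(V')|-c$, where $c$ is the number of connected components of the star-union. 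The required inequality then amounts to $\sum_{v\in V'}f(v)\le |\Gamma_G(V')|-c$, which follows by applying (ii) to the vertex set of each of the $c$ components and summing, since distinct components have disjoint neighborhoods. This is the technical heart, and I expect the all-or-nothing reduction to unions of stars to be the main obstacle to make fully rigorous.

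Finally, in the second stage I would observe that the shifted function $g$ appearing in the definition of internal inactivity satisfies $\sum_v g(v)=\sum_v f(v)=q-1$, so $g$ automatically obeys (i); and insisting that $g$ map into $\ZZ_{\ge 0}$ forces $f(v_j)\ge 1$, exactly the nonnegativity demanded of a hypertree. Hence, by the characterization of the first stage, $g$ is a hypertree if and only if $g$ satisfies (ii), which is precisely the condition defining membership of $v_j$ in $\eta_G(f)$. Therefore $v_j$ is internally inactive with respect to $f$ if and only if $v_j\in\eta_G(f)$, giving $\overline\iota(f)=|\eta_G(f)|$. Combining the two stages, the coefficient of $x^k$ in $I_G(x)$ counts exactly the functions $f$ satisfying (i), (ii) and $|\eta_G(f)|=k$, which is the assertion.
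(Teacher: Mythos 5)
Your proposal is correct, but note first that the paper contains no proof of this proposition to compare against: it is imported verbatim as \cite[Theorem 3.4]{interior}, so the relevant comparison is with K\'alm\'an's original argument rather than anything internal to the paper. Your two-stage plan is the right one and your reasoning checks out: the edge count in a spanning tree of $G$ (which has $p+q$ vertices, hence $p+q-1$ edges, each meeting $V_1$ in exactly one vertex) gives (i), and applied to the subforest of edges meeting a nonempty $V'\subseteq V_1$ it gives (ii); and once hypertrees are identified with the solutions of (i)--(ii), condition (iii) becomes literally the definition of internal inactivity, since the transfer $g$ preserves the total $q-1$ (so (i) is automatic) and the requirement $g\colon V_1\to\ZZ_{\ge 0}$ encodes $f(v_j)\ge 1$. (One reading convention you rely on, correctly: (ii) is imposed only for nonempty $V'$, and connectivity gives $\Gamma_G(V_1)=V_2$.) Where you genuinely depart from the source is the converse of stage one: K\'alm\'an obtains the lattice-point description of hypertrees essentially from Postnikov's dragon-marriage theorem --- the Hall-type statement about degree sequences of spanning trees that this paper alludes to in its introduction via the Davis--Chen ``draconian sequences'' --- whereas you run matroid intersection on the graphic matroid and the partition matroid of stars at $V_1$ (a partition matroid precisely because $G$ is bipartite, so the stars at the vertices of $V_1$ partition $E(G)$). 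Your accounting is right: a common independent set of size $p+q-1$ is a spanning tree saturating every cap by (i), and for star-unions the Edmonds bound reduces, via the components of the star-union having pairwise disjoint neighborhoods, to (ii) summed over components. This buys a proof quotable from any matroid text, at the cost of the reduction step you flag.

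That flagged step --- restricting the minimization of $r_1(A)+r_2(E(G)\setminus A)$ to unions of stars --- is genuinely needed, but it is a three-line fix rather than a gap that would sink the argument. Given arbitrary $A\subseteq E(G)$, write $\delta(v)$ for the star at $v\in V_1$, set $V'=\{v\in V_1 : |\delta(v)\setminus A|\le f(v)\}$ and $A'=\bigcup_{v\in V'}\delta(v)$. Since $f(v)+1\le\deg_G(v)$ by the singleton case of (ii), one computes $r_2(E(G)\setminus A)=\sum_{v\in V'}|\delta(v)\setminus A|+\sum_{v\notin V'}(f(v)+1)$ and $r_2(E(G)\setminus A')=\sum_{v\notin V'}(f(v)+1)$, so replacing $A$ by $A'$ lowers the second term by exactly $\sum_{v\in V'}|\delta(v)\setminus A|=|A'\setminus A|$; meanwhile monotonicity and the unit-increase property of the graphic rank give $r_1(A')\le r_1(A\cup A')\le r_1(A)+|A'\setminus A|$. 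Hence $r_1(A')+r_2(E(G)\setminus A')\le r_1(A)+r_2(E(G)\setminus A)$, and your computation for star-unions then completes the verification that the minimum is at least $p+q-1$.
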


From \cite[Lemma 2.4]{DC} and \cite[Theorems 1.1 and 3.10]{KalPos},
we have the following.

\begin{Proposition}
\label{hpoly=interior}
Let $G$ be a connected graph.
Then $h^*(\nabla_G^{PG},x) = I_{D(G)}(x)$.
In particular, the normalized volume of $\nabla_G^{PG}$
is $I_{D(G)}(1)=|{\rm HT}(D(G))|$.
\end{Proposition}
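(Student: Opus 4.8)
The plan is to chain the two cited results into a single identity, since the proposition is essentially a transitivity statement linking PQ-type adjacency polytopes to root polytopes and then to interior polynomials. First I would invoke \cite[Lemma 2.4]{DC} to record that $\nabla^{\rm PQ}_G$ is unimodularly equivalent to the root polytope $\Qc_{D(G)}$. Because the $h^*$-polynomial (equivalently the Ehrhart series) and the normalized volume are invariants of a lattice polytope under unimodular equivalence --- a standard consequence of the fact that a unimodular affine map restricts to a bijection on the relevant lattices and hence preserves every lattice-point count $L_{\Pc}(t)$ --- this immediately yields $h^*(\nabla^{\rm PQ}_G,x) = h^*(\Qc_{D(G)},x)$ and ${\rm Vol}(\nabla^{\rm PQ}_G) = {\rm Vol}(\Qc_{D(G)})$.

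Next I would verify that $D(G)$ is a \emph{connected} bipartite graph, which is the hypothesis needed to apply the results of K\'{a}lm\'{a}n--Postnikov. Connectivity follows from that of $G$: every vertex $i$ is joined to $\overline{i}$ by the edge $\{i,\overline{i}\}$, and each edge $\{i,j\}$ of $G$ contributes the edges $\{i,\overline{j}\}$ and $\{\overline{i},j\}$, so any walk in $G$ lifts to a walk in $D(G)$ reaching both halves of the bipartition. With connectivity in hand, \cite[Theorems 1.1 and 3.10]{KalPos} give $h^*(\Qc_{D(G)},x) = I_{D(G)}(x)$, and combining this with the previous step proves the first assertion.

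Finally, for the volume statement I would evaluate at $x = 1$. On the polytope side, the general identity $h^*(\Pc,1) = {\rm Vol}(\Pc)$ recalled in Section~\ref{sec:pre} gives ${\rm Vol}(\nabla^{\rm PQ}_G) = I_{D(G)}(1)$. On the hypergraph side, since $I_{\Hc}(x) = \sum_{f \in {\rm HT}(\Hc)} x^{\overline{\iota}(f)}$ is by definition a generating function over hypertrees, setting $x = 1$ simply counts them, so that $I_{D(G)}(1) = |{\rm HT}(D(G))|$ in the notation fixed above.

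I do not anticipate a genuine obstacle here, as both nontrivial ingredients are supplied by the cited literature and the argument is a formal composition of known facts. The only points requiring care are confirming that the conventions for the root polytope and for the interior polynomial in \cite{KalPos} agree with those fixed in Section~\ref{sec:pre}, and checking the connectivity of $D(G)$ so that those theorems genuinely apply; both are routine.
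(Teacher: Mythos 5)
Your proposal is correct and follows exactly the route the paper takes: it cites \cite[Lemma 2.4]{DC} for the unimodular equivalence $\nabla^{\rm PQ}_G \cong \Qc_{D(G)}$ and \cite[Theorems 1.1 and 3.10]{KalPos} for $h^*(\Qc_{D(G)},x)=I_{D(G)}(x)$, then evaluates at $x=1$. The additional checks you flag (connectivity of $D(G)$, invariance of $h^*$ under unimodular equivalence) are routine and correctly handled.
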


Let $G$ be a finite graph on the vertex set $V=[n]$.
A {\em $k$-matching} of $G$ is a set of $k$ pairwise non-adjacent edges of $G$.
The {\em matching generating polynomial} of $G$ is
$$g(G,x) =\sum_{k \ge 0} m_G(k) x^k,$$
where $m_G(k)$ is the number of $k$-matchings
of $G$.
A $k$-matching of $G$ is said to be {\em perfect} if $2k =n$.
A subset $S \subset V$ is called a {\em perfectly matchable set} \cite{PMS}
if the induced subgraph of $G$ on the vertex set $S$ has a perfect matching.
Let $\pmset (G, k)$ be the set of all perfectly matchable sets $S$ of $G$ with $|S|=2k$ and $\pmset(G)$ the set of all perfectly matchable sets of $G$.
We regard $\emptyset$ as a perfectly matchable set and we set $\pmset(G,0) = \{\emptyset\}$.
Note that $|\pmset (G, k)| \le m_G(k)$ holds in general.
We call the polynomial 
$$\PMS(G,x) = \sum_{k \geq 0}|\pmset(G,k)| x^k$$ 
the {\em perfectly matchable set polynomial} (PMS polynomial) of $G$. 

\begin{Example}
For the cycle $C_4$ of length $4$, we have
$g(C_4,x) = 2 x^2 + 4x+1$ and
$\PMS (C_4,x) = x^2 + 4x+1$.
On the other hand, if a graph $G$ has no even cycles, then we have
$g(G,x) = \PMS(G,x)$.
\end{Example}

Assume that $G$ is a bipartite graph with a bipartition
$[n] =V_1 \sqcup V_2$.
Then let $\widetilde{G}$ be a connected bipartite graph on $[n+2]$
whose edge set is 
\[
E(\widetilde{G}) = E(G) \cup \{ \{i, n+1\}  : i \in V_1\} \cup \{ \{j, n+2\}  : j \in V_2 \cup \{n+1\}\}.
\]

\begin{Proposition}[{\cite[Proposition 3.4]{OTinterior}}]
\label{prop:matchinginterior}
Let $G$ be a bipartite graph. Then we have
\[
I_{\widetilde{G}}(x)= \PMS(G,x) .
\]
\end{Proposition}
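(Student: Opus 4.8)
The plan is to evaluate $I_{\widetilde{G}}(x)$ through the explicit coefficient formula of Proposition~\ref{coef_interior}, taking advantage of the two ``universal'' vertices that the passage from $G$ to $\widetilde{G}$ creates, and then to identify the resulting count with perfectly matchable sets of $G$. Write $a=n+2$ and $b=n+1$. First I would record that $\widetilde{G}$ is connected bipartite with parts $\widetilde{V}_1=V_1\cup\{a\}$ and $\widetilde{V}_2=V_2\cup\{b\}$, and that, directly from the definition of $E(\widetilde{G})$, the vertex $a$ is adjacent to \emph{all} of $\widetilde{V}_2$ and $b$ to \emph{all} of $\widetilde{V}_1$. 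I would then apply Proposition~\ref{coef_interior} with $\widetilde{V}_1$ playing the role of the domain (so $p=|V_1|+1$ and $q=|V_2|+1$), fixing a linear order on $\widetilde{V}_1$ that lists the vertices of $V_1$ first and $a$ last.

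The first substantive step is to simplify the Hall-type condition (ii) using universality. For a subset $S\subseteq\widetilde{V}_1$ containing $a$ one has $\Gamma_{\widetilde{G}}(S)=\widetilde{V}_2$, so (ii) reads $\sum_{v\in S}f(v)\le|V_2|$ and is implied by (i); for $S\subseteq V_1$ one has $\Gamma_{\widetilde{G}}(S)=\Gamma_G(S)\cup\{b\}$, so (ii) becomes $\sum_{v\in S}f(v)\le|\Gamma_G(S)|$. Hence, writing $g=f|_{V_1}$ and recovering $f(a)=|V_2|-\sum_{v\in V_1}g(v)$ from (i) (which is automatically nonnegative since $\sum_{v\in V_1}g(v)\le|\Gamma_G(V_1)|\le|V_2|$), the functions counted by Proposition~\ref{coef_interior} are exactly the lattice points of the integral polymatroid $P(G)=\{g\in\RR_{\ge0}^{V_1}:g(S)\le|\Gamma_G(S)|\text{ for all }S\subseteq V_1\}$, each $g$ determining a hypertree $f_g$. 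A quick sanity check is reassuring: the number of lattice points of $P(G)$ already equals $|\pmset(G)|$ in small cases (for $G=C_4$ both equal $6$, and for $G=2K_2$ both equal $4$).

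It then remains to prove the graded statement $\sum_{g\in P(G)\cap\ZZ^{V_1}}x^{\,\overline{\iota}(f_g)}=\PMS(G,x)$, that is, that the lattice points of $P(G)$ with exactly $k$ internally inactive coordinates correspond to the perfectly matchable sets of $G$ of cardinality $2k$. The approach I would take is to extract a canonical matching of $G$ from each $g$: by condition (iii) a coordinate $v_j$ is internally inactive exactly when one unit of $g$ can be pushed to an earlier coordinate while remaining in $P(G)$, and the feasibility of such an exchange is governed, via Hall's theorem and the exchange property of $P(G)$, by an alternating (augmenting) path in $G$. Reading off the vertices saturated by these paths should produce a perfectly matchable set $S$ with $|S|=2\,\overline{\iota}(f_g)$, while processing a perfectly matchable set greedily in the fixed order should invert the assignment. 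I expect this to be the main obstacle: the internal-inactivity statistic is order-dependent and is not a simple function of $g$ — already for $G=2K_2$ the points $(1,0)$ and $(0,1)$ of $P(G)$, though of equal total weight, satisfy $\overline{\iota}(f_g)=1$ and $\overline{\iota}(f_g)=2$ respectively — so the crux is to verify that the augmenting-path construction is well defined, is a bijection, and transports $|\eta_{\widetilde{G}}(f_g)|$ to half the cardinality of the perfectly matchable set. Evaluating at $x=1$ would then yield the equality of normalized volumes at once.
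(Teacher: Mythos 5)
This proposition is not proved in the paper at all: it is imported verbatim from \cite[Proposition 3.4]{OTinterior}, so there is no internal argument to compare against and your attempt must stand on its own. Its first half does stand. With $\widetilde{V}_1=V_1\cup\{n+2\}$, $\widetilde{V}_2=V_2\cup\{n+1\}$ and your ordering, the universality of the two new vertices correctly collapses conditions (i) and (ii) of Proposition~\ref{coef_interior} for $\widetilde{G}$: for $S\ni n+2$ the bound is implied by (i), and for $S\subseteq V_1$ it becomes $\sum_{v\in S}f(v)\le|\Gamma_G(S)|$, so the functions being counted are exactly the lattice points $g=f|_{V_1}$ of the polymatroid $P(G)$, with $f(n+2)$ recovered from (i) and automatically nonnegative. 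This is the same opening move the present paper makes in analogous settings (Cases 1 and 2 in Section~\ref{sec:main}, and the proofs of Theorems~\ref{summand} and \ref{thm:wheel}, which simplify the Hall-type conditions using universal vertices in exactly this way).

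Everything after that, however, is a plan rather than a proof, and the plan is where the entire content of the proposition lives. What must be shown is the graded identity: the number of $g\in P(G)\cap\ZZ^{V_1}$ with $\overline{\iota}(f_g)=k$ equals $|\pmset(G,k)|$. You only assert that an augmenting-path construction ``should'' produce a matchable set of size $2\,\overline{\iota}(f_g)$ and that the crux ``is to verify'' well-definedness and bijectivity; no map is defined and no inverse is given --- indeed even the ungraded equality $|P(G)\cap\ZZ^{V_1}|=|\pmset(G)|$ is supported only by your sanity checks, not derived. Your own example shows why this step cannot be waved through: for $G=2K_2$ with order $u_1<u_2<n+2$, the four points $(0,0),(1,0),(0,1),(1,1)$ of $P(G)$ have activities $1,1,2,0$ respectively, so the construction must assign to $(0,0)$ --- where no unit can move between coordinates of $V_1$ and the inactivity comes solely from the last hyperedge $n+2$ --- one specific edge of $G$, the other edge to $(1,0)$, and $\emptyset$ to the all-ones point, at which no exchange of any kind is feasible. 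The statistic is order-dependent, not monotone in $g$, and not a function of $\sum_v g(v)$, so the ``canonical matching extracted from alternating paths'' is precisely the missing idea, not a routine verification; it is exactly the combinatorial bridge that the proof in \cite{OTinterior} supplies. As it stands, the proposal establishes the correct reduction to $P(G)$ but not the equality $I_{\widetilde{G}}(x)=\PMS(G,x)$.
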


The following Lemma is easy to see.
However it is useful when we apply 
Proposition~\ref{prop:matchinginterior} to such graphs.

\begin{Lemma}
\label{suspensionLemma}
Let $G$ be a graph.
Then we have
$$
D(G+K_1) = \widetilde{D(G)} 
.$$
\end{Lemma}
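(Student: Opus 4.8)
The plan is to prove the identity by unwinding both constructions into explicit vertex and edge sets and exhibiting a label-matching bijection between them. First I would record that $D(G)$ is bipartite with bipartition $[n] \sqcup [\overline{n}]$: each of its edges has the form $\{i,\overline{i}\}$, $\{i,\overline{j}\}$, or $\{\overline{i},j\}$, hence joins a vertex of $[n]$ to a vertex of $[\overline{n}]$. Thus the construction $G \mapsto \widetilde{G}$ applies to $D(G)$ with $V_1=[n]$ and $V_2=[\overline{n}]$, and it adjoins two new vertices: one, say $u$, joined to every element of $[n]$, and one, say $w$, joined to every element of $[\overline{n}]$ and to $u$. The remaining edges of $\widetilde{D(G)}$ are precisely those of $D(G)$.

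Next I would compute $D(G+K_1)$ directly. Writing $G+K_1$ on the vertex set $[n+1]$ with apex $n+1$ joined to all of $[n]$, the graph $D(G+K_1)$ lives on $[n+1]\sqcup[\overline{n+1}]$, and its edges fall into three families: the diagonals $\{i,\overline{i}\}$ for $i\in[n+1]$; the pairs $\{i,\overline{j}\},\{\overline{i},j\}$ arising from the edges $\{i,j\}$ of $G$ itself (with $i,j\le n$); and the pairs arising from the join edges $\{i,n+1\}$ ($i\in[n]$), which under the rule defining $D$ produce $\{i,\overline{n+1}\}$ and $\{\overline{i},n+1\}$. Restricted to $[n]\cup[\overline{n}]$, the first two families are exactly the edges of $D(G)$, and what remains is the single diagonal $\{n+1,\overline{n+1}\}$ together with the join family.

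The crux is then a bookkeeping observation about which apex attaches to which part. Since $D$ sends each join edge $\{i,n+1\}$ to the cross pairs $\{i,\overline{n+1}\}$ and $\{\overline{i},n+1\}$, the vertex $\overline{n+1}$ becomes adjacent to all of $[n]$, while $n+1$ becomes adjacent to all of $[\overline{n}]$; together with $\{n+1,\overline{n+1}\}$ this matches $u$ and $w$ exactly when we identify $\overline{n+1}$ with $u$ and $n+1$ with $w$. With this identification and the identity on $[n]\cup[\overline{n}]$, I would check that each of the three edge families of $D(G+K_1)$ maps onto an edge family of $\widetilde{D(G)}$ and conversely: the $G$-edges and the diagonals $\{i,\overline{i}\}$ ($i\le n$) give the $D(G)$-part, $\{i,\overline{n+1}\}\mapsto\{i,u\}$ and $\{\overline{i},n+1\}\mapsto\{\overline{i},w\}$ give the two stars at $u$ and $w$, and $\{n+1,\overline{n+1}\}\mapsto\{u,w\}$ gives the connecting edge. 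This yields $D(G+K_1)=\widetilde{D(G)}$.

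The whole argument is a verification of definitions, so there is no deep obstacle; the one place demanding care—and the step I expect to occupy the attention—is exactly this bar-swap, namely keeping straight that the primal apex $n+1$ ends up attached to the barred part $[\overline{n}]$ while its mirror $\overline{n+1}$ attaches to the unbarred part $[n]$. Matching the two new vertices to the correct one of $u,w$ (rather than interchanging them) is precisely what makes the edge sets coincide on the nose, as the stated equality requires.
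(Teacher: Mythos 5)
Your proof is correct: the paper omits the argument entirely (stating only that the lemma "is easy to see"), and your direct unwinding of the two constructions — matching the diagonal and $G$-edges to the $D(G)$-part, and identifying $\overline{n+1}$ with the new vertex attached to $[n]$ and $n+1$ with the one attached to $[\overline{n}]\cup\{u\}$ — is exactly the verification the authors intend. The bar-swap you flag is indeed the only point requiring care, and you have it right.
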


Moreover, we have

\begin{Proposition}
\label{sus_matchable}
Let $G$ be a graph with $n$ vertices.
Then the $h^*$-polynomial of $\nabla_{G+K_1}^{\rm PQ}$ is 
$
\PMS (D(G), x)
.$
In particular, the normalized volume of $\nabla_{G+K_1}^{\rm PQ}$ is 
$|\pmset(D(G))|$.
\end{Proposition}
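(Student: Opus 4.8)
The plan is to prove the identity by chaining together three results established above, each of which rewrites the $h^*$-polynomial into a more tractable form, and then to specialize at $x=1$ for the volume statement. The first step is to put ourselves in a position to apply Proposition~\ref{hpoly=interior}, whose only hypothesis is connectivity of the graph. So I would first observe that $G + K_1$ is connected for an arbitrary graph $G$: the single vertex of $K_1$ is joined to every vertex of $G$ and thus acts as a universal vertex, so the join is connected even when $G$ itself is disconnected. Granting this, Proposition~\ref{hpoly=interior} gives
\[
h^*(\nabla_{G+K_1}^{\rm PQ}, x) = I_{D(G+K_1)}(x).
\]

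Next I would transform the right-hand side using the remaining two ingredients. By Lemma~\ref{suspensionLemma} we have $D(G+K_1) = \widetilde{D(G)}$, so the interior polynomial above equals $I_{\widetilde{D(G)}}(x)$. Since $D(G)$ is bipartite by construction, I may apply Proposition~\ref{prop:matchinginterior} with $D(G)$ playing the role of the bipartite graph, obtaining $I_{\widetilde{D(G)}}(x) = \PMS(D(G), x)$. Composing these three equalities yields the first assertion $h^*(\nabla_{G+K_1}^{\rm PQ}, x) = \PMS(D(G), x)$.

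For the normalized volume I would simply specialize at $x = 1$. As recalled in Section~\ref{sec:pre}, one has $h^*(\Pc, 1) = {\rm Vol}(\Pc)$ for every lattice polytope $\Pc$, so the left-hand side becomes ${\rm Vol}(\nabla_{G+K_1}^{\rm PQ})$. On the right-hand side, the defining formula $\PMS(D(G), x) = \sum_{k \ge 0} |\pmset(D(G), k)| x^k$ gives $\PMS(D(G), 1) = \sum_{k \ge 0} |\pmset(D(G), k)| = |\pmset(D(G))|$, since the perfectly matchable sets of $D(G)$ are partitioned by cardinality. This establishes ${\rm Vol}(\nabla_{G+K_1}^{\rm PQ}) = |\pmset(D(G))|$.

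Since the argument is a short composition of results already in hand, there is no serious obstacle. The only point requiring genuine care is confirming that the hypotheses of the invoked propositions hold — in particular the connectivity of $G + K_1$ needed for Proposition~\ref{hpoly=interior} — which is exactly the observation secured in the first step.
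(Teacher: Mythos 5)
Your proof is correct and follows exactly the same route as the paper, which simply cites Propositions~\ref{hpoly=interior} and \ref{prop:matchinginterior} together with Lemma~\ref{suspensionLemma}; you have merely spelled out the chain of equalities and the connectivity check that the paper leaves implicit.
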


\begin{proof}
The assertion follows from Propositions \ref{hpoly=interior}, \ref{prop:matchinginterior}
and Lemma~\ref{suspensionLemma}.
\end{proof}

\begin{Remark}
Given a graph $G$, the number of matchings of $G$ is called {\em Hosoya index} of $G$
and denoted by $Z(G)$.
From Proposition~\ref{sus_matchable}, the normalized volume of 
$\nabla_{G+K_1}^{\rm PQ}$ is at most $Z(D(G))$.
\end{Remark}

Let $G$ be a graph on the vertex set $[n]$. 
Given a subset $S \subset [n]$, we associate the $(0,1)$-vector $\rho(S)=\sum_{i \in S} {\bf e}_i \in \RR^n$.
For example, $\rho(\emptyset) = {\bf 0} \in \RR^n$.
The convex hull of 
$$\{ \rho(S)  : S \in \pmset(G,k) \mbox{ for some } k \}$$
 is called a {\em perfectly matchable subgraph polytope}
(PMS polytope) of $G$.
A system of linear inequalities for a PMS polytope of $G$ was given in \cite{PMS} for bipartite graphs,
and in \cite{PMS2} for arbitrary graphs.

\begin{Proposition}[{\cite[Theorem~1]{PMS}}]
\label{prop:PMS_inequalities}
Let $G$ be a bipartite graph on the vertex set $[n]=V_1 \sqcup V_2$. 
Then the PMS polytope of $G$ is the set of all vectors 
$(x_1,\ldots,x_n) \in \RR^n$ such that
$$
0 \le x_i \le 1 \ \  \mbox{ for each } i=1,2,\ldots,n,
$$
$$
\sum_{i \in V_1} x_i = \sum_{j \in V_2} x_j,
$$
$$
\sum_{i \in S} x_i  \le \sum_{j \in \Gamma_G(S)} x_j \ \  \mbox{ for all } S \subset V_1,
$$
where $\Gamma_G(S) \subset V_2$ is the set of vertices adjacent to some vertex in $S$.
\end{Proposition}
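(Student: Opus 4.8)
The plan is to prove that the polytope $P \subseteq \RR^n$ cut out by the displayed system equals the PMS polytope $Q = \conv\{\rho(S) : S \in \pmset(G)\}$. The inclusion $Q \subseteq P$ is routine: if $S \in \pmset(G)$, then fixing a perfect matching of the induced subgraph $G[S]$ shows $|S \cap V_1| = |S \cap V_2|$ (this is the balance equation for $\rho(S)$), while for any $S' \subseteq V_1$ the matched partners of the vertices in $S \cap S'$ inject into $\Gamma_G(S') \cap S$, so that $\sum_{i \in S'} (\rho(S))_i = |S \cap S'| \le |\Gamma_G(S') \cap S| = \sum_{j \in \Gamma_G(S')} (\rho(S))_j$; the box constraints $0 \le (\rho(S))_i \le 1$ are immediate. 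Since $P$ is convex and contains every $\rho(S)$, this gives $Q \subseteq P$.

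For the reverse inclusion $P \subseteq Q$, I would realize $Q$ as a coordinate projection of the bipartite fractional matching polytope $M = \{ y \in \RR_{\ge 0}^{E(G)} : \sum_{e \ni v} y_e \le 1 \text{ for all } v \in [n]\}$ under the degree map $\pi(y)_v = \sum_{e \ni v} y_e$. Because $G$ is bipartite, its vertex-edge incidence matrix is totally unimodular, so $M$ is integral and its vertices are exactly the indicator vectors of matchings of $G$; as $\pi$ sends the indicator of a matching to $\rho$ of the set of vertices it covers, and every perfectly matchable set is covered by such a matching, linearity of $\pi$ yields $\pi(M) = \conv\{\rho(S) : S \in \pmset(G)\} = Q$. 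It therefore suffices to show that every $x \in P$ lifts, i.e.\ admits some $y \in M$ with $\pi(y) = x$.

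This lifting step is the crux. Given $x \in P$, I would set up a transportation network: a source $\sigma$ joined to each $i \in V_1$ with capacity $x_i$, each $j \in V_2$ joined to a sink $\tau$ with capacity $x_j$, and each edge $\{i,j\} \in E(G)$ oriented $i \to j$ with infinite capacity. A flow saturating all source arcs yields the desired $y$ as its restriction to the $G$-arcs, and $x_v \le 1$ forces $\sum_{e \ni v} y_e = x_v \le 1$, so $y \in M$ automatically. By the max-flow--min-cut theorem such a flow exists precisely when every finite cut has capacity at least $\sum_{i \in V_1} x_i$; a finite cut corresponds to a set $A \subseteq V_1$ on the source side, which forces $\Gamma_G(A)$ to the source side, and minimizing the capacity over the sink side gives exactly the condition $\sum_{j \in \Gamma_G(A)} x_j \ge \sum_{i \in A} x_i$. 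These are precisely the Hall-type inequalities of the system, and the balance equation guarantees the maximum flow value equals $\sum_{i \in V_1} x_i = \sum_{j \in V_2} x_j$, so the source arcs are in fact saturated. Hence every $x \in P$ lifts, giving $P \subseteq \pi(M) = Q$ and therefore $P = Q$. The subtle points to get right are that the minimizing finite cut is indeed indexed by subsets $A \subseteq V_1$ with sink-side part $\Gamma_G(A)$, and that the balance equation is exactly what upgrades an arbitrary maximum flow to one saturating all source arcs.
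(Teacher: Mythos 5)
This proposition is quoted from Balas--Pulleyblank \cite[Theorem~1]{PMS}; the paper gives no proof of it, so there is nothing internal to compare against. Your argument is correct and complete: the inclusion $Q\subseteq P$ via a fixed perfect matching of $G[S]$, the identification of the PMS polytope as the degree-map image of the (integral, by total unimodularity) bipartite fractional matching polytope, and the max-flow--min-cut lifting step all check out --- in particular, the case of a vertex $j\in V_2\setminus\Gamma_G(V_1)$ with $x_j>0$ is correctly excluded by combining the balance equation with the inequality for $S=V_1$, and saturation of both source and sink arcs follows because the flow value equals the total capacity on each side. This is essentially the classical network-flow proof of the original reference, so no further comment is needed.
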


\section{join graphs}
\label{sec:main}

In the present section, we give a proof of Theorem~\ref{mainthm}.
Given a graph $G$ and a nonnegative integer $k$, let
$F_G(k)$ be the set of functions satisfying conditions (i)--(iii) in Proposition~\ref{coef_interior} for $D(G)$.

\begin{Lemma}
Let $G_1$ and $G_2$ be graphs with $m_1$ and $m_2$ vertices, respectively.
Suppose that $G_1$ and $G_2$ have no common vertices.
Then for the join $G= G_1 +G_2$ with $m = m_1+m_2$ vertices,
we have
\begin{eqnarray}
\label{intersectionLemma}
\bigsqcup_{k} F_G(k) = \left( \bigsqcup_{k}  F_{G_1+K_{m_2}}(k) \right) \cap \left(\bigsqcup_{k}  F_{K_{m_1}+G_2}(k)\right),
\end{eqnarray}
\begin{eqnarray}
\label{unionLemma}
\left( \bigsqcup_{k}  F_{G_1+K_{m_2}}(k) \right) \cup \left(\bigsqcup_{k}  F_{K_{m_1}+G_2}(k)\right)  = \bigsqcup_{k} F_{K_m}(k).
\end{eqnarray}
\end{Lemma}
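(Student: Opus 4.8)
The plan is to reduce both identities to purely set-theoretic statements about the solution sets of conditions (i) and (ii) of Proposition~\ref{coef_interior}, and then to carry out the comparison by a case analysis on the neighborhoods in $D(G)$. First I would record that condition (iii) only assigns to each admissible $f$ the grading $k=\overline{\iota}(f)$; hence each disjoint union $\bigsqcup_k F_G(k)$ is nothing but the set of all $f\colon[m]\to\ZZ_{\ge 0}$ satisfying (i) and (ii) for $D(G)$, and it suffices to prove the two displayed identities at the level of these solution sets. Next I would compute the relevant neighborhoods: taking $V_1=[m]$ and $V_2=[\overline{m}]$ in $D(G)$, one checks directly from the edge set of $D(G)$ that $\Gamma_{D(G)}(S)=\{\overline{j} : j\in N_G[S]\}$ for every $S\subseteq[m]$, where $N_G[S]=S\cup\Gamma_G(S)$ is the closed neighborhood; in particular $|\Gamma_{D(G)}(S)|=|N_G[S]|$, and condition (i) reads $\sum_{i\in[m]}f(i)=m-1$ for all four graphs involved.

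Writing $S=S_1\sqcup S_2$ with $S_1\subseteq V(G_1)$ and $S_2\subseteq V(G_2)$, I would then evaluate $|N_G[S]|$ in each join. Because every vertex of one part is adjacent to every vertex of the other, $|N_{G_1+G_2}[S]|=m$ whenever both $S_1$ and $S_2$ are non-empty, while $|N_{G_1+G_2}[S_1]|=|N_{G_1}[S_1]|+m_2$ and $|N_{G_1+G_2}[S_2]|=m_1+|N_{G_2}[S_2]|$ for the one-sided sets; replacing $G_2$ (resp.\ $G_1$) by a complete graph leaves the other side unchanged but turns these one-sided values into $m$. Using $\sum_i f(i)=m-1$ and $f\ge 0$, every constraint coming from a set $S$ with $|N_G[S]|=m$ is automatically satisfied, so condition (ii) collapses to the two families (A1) $\sum_{i\in S_1}f(i)\le|N_{G_1}[S_1]|+m_2-1$ for all non-empty $S_1\subseteq V(G_1)$, and (A2) $\sum_{j\in S_2}f(j)\le|N_{G_2}[S_2]|+m_1-1$ for all non-empty $S_2\subseteq V(G_2)$. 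Explicitly, $G_1+G_2$ imposes both (A1) and (A2), $G_1+K_{m_2}$ imposes only (A1), $K_{m_1}+G_2$ imposes only (A2), and $K_m$ imposes neither. The identity (\ref{intersectionLemma}) is then immediate: a function lies in $\bigsqcup_k F_{G_1+G_2}(k)$ iff it satisfies (i), (A1) and (A2), which is exactly the condition cutting out the intersection on the right.

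For (\ref{unionLemma}) the inclusion $\subseteq$ is clear, since each of the two left-hand sets is cut out by (i) together with a subset of the constraints defining $\bigsqcup_k F_{K_m}(k)=\{f : \text{(i)}\}$. The reverse inclusion is the one substantive point, and I expect it to be the main (though short) obstacle: I must show that every $f$ satisfying (i) satisfies (A1) or (A2). Suppose instead that $f$ violated both, so there are non-empty $S_1\subseteq V(G_1)$ and $S_2\subseteq V(G_2)$ with $\sum_{i\in S_1}f(i)\ge|N_{G_1}[S_1]|+m_2$ and $\sum_{j\in S_2}f(j)\ge|N_{G_2}[S_2]|+m_1$. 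Since $S_i\subseteq N_{G_i}[S_i]$ and $S_i\ne\emptyset$ give $|N_{G_i}[S_i]|\ge1$, adding the two inequalities yields $\sum_{i\in S_1}f(i)+\sum_{j\in S_2}f(j)\ge m_1+m_2+2=m+2$; but $S_1$ and $S_2$ are disjoint and $f\ge0$, so the left-hand side is at most $\sum_{i\in[m]}f(i)=m-1$ by (i), a contradiction. Hence $f$ satisfies (A1) or (A2), which proves $\bigsqcup_k F_{K_m}(k)\subseteq\bigsqcup_k F_{G_1+K_{m_2}}(k)\cup\bigsqcup_k F_{K_{m_1}+G_2}(k)$ and completes the argument.
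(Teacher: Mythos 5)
Your proof is correct and follows essentially the same route as the paper's: you reduce condition (ii) to the two one-sided families of constraints (your (A1)/(A2) are exactly the paper's inequalities (\ref{dg1}) and (\ref{dg2}), since $|\Gamma_{D(G_i)}(S)|=|N_{G_i}[S]|$), deduce (\ref{intersectionLemma}) immediately, and prove the nontrivial inclusion in (\ref{unionLemma}) by summing two violated inequalities against condition (i). The only presentational differences are that you make explicit the (correct, and implicitly used by the paper) observation that condition (iii) merely grades the solution set of (i)--(ii), and you phrase the neighborhoods via $N_G[S]$ rather than $\Gamma_{D(G)}$.
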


\begin{proof}
Let $V_1 \sqcup V_2$ with $V_1=\{v_1,\ldots, v_{m}\}$
and $V_2=\{v_1',\ldots, v_{m}'\}$ 
denote the common vertex set of 
$D(G)$, $D(G_1+K_{m_2})$, $D(K_{m_1}+G_2)$ and $D(K_m)$ $(=K_{m,m})$.
In addition, let 
$V_{1,1} =\{v_1,\ldots, v_{m_1}\}$, $V_{1,2} =\{v_{m_1+1},\ldots, v_m\}$,
$V_{2,1} =\{v_1',\ldots, v_{m_1}'\}$ and $V_{2,2} =\{v_{m_1+1}',\ldots, v_m'\}$,
where $V_{1,j} \sqcup V_{2,j}$ corresponds to the vertex set of $D(G_j)$ for $j =1,2$.

\bigskip

\noindent
{\bf Case 1} ($D(K_m)$){\bf .} 
Since $ |\Gamma_{D(K_m)}(V')| -1 = m-1$ for all $V' \subset V_1$,
condition (i) implies condition (ii) in Proposition~\ref{coef_interior}.

\bigskip

\noindent
{\bf Case 2} ($D(G_1+K_{m_2})$){\bf .} 
Suppose that $f$ satisfies condition (i).
If $v \in V' \subset V_1$ for some $v \in V_{1,2} $, 
then  condition (ii) holds for $V'$
since $\deg (v) = m$.
Thus, if $f$ satisfies condition (i), then
condition (ii) in Proposition~\ref{coef_interior} is equivalent to the condition
\begin{equation}
\label{dg1}
\sum_{v \in V'} f(v) \le |\Gamma_{D(G_1)}(V')| + m_2 -1 \mbox{ for all } V'  \subset V_{1,1}.
\end{equation}

\noindent
{\bf Case 3} ($D(K_{m_1}+G_2)$){\bf .} 
By the similar argument as in Case 2, if $f$ satisfies condition (i), then
condition (ii) in Proposition~\ref{coef_interior} is equivalent to the condition
\begin{equation}
\label{dg2}
\sum_{v \in V'} f(v) \le |\Gamma_{D(G_2)}(V')| + m_1 -1 \mbox{ for all } V'  \subset V_{1,2}.
\end{equation}

\noindent
{\bf Case 4} ($D(G)$){\bf .} 
Suppose that $f$ satisfies condition (i).
If $v,v' \in V' \subset V_1$ for some $v \in V_{1,1}$ and $v' \in V_{1,2}$,
then condition (ii) holds for $V'$
since $\Gamma_{D(G)}(\{v,v'\}) =V_2$.
Hence, if $f$ satisfies condition (i), then 
condition (ii) in Proposition~\ref{coef_interior} holds if and only if both (\ref{dg1}) and (\ref{dg2}) hold.

\bigskip

\noindent
Hence (\ref{intersectionLemma}) holds.

Since any element in $F_{G_1+K_{m_2}}(k) \cup  F_{K_{m_1}+G_2}(k)$ satisfies condition (i), we have
$$
\left( \bigsqcup_{k}  F_{G_1+K_{m_2}}(k) \right) \cup \left(\bigsqcup_{k}  F_{K_{m_1}+G_2}(k)\right)  \subset \bigsqcup_{k} F_{K_m}(k).
$$
Suppose that $f$ satisfies none of (\ref{dg1}) and (\ref{dg2}).
Then 
$$\sum_{v \in V'} f(v) \ge |\Gamma_{D(G_1)}(V')| + m_2  \mbox{ for some } V'  \subset V_{1,1},$$
$$\sum_{v \in V''} f(v) \ge |\Gamma_{D(G_2)}(V'')| + m_1 \mbox{ for some } V''  \subset V_{1,2}.$$
It then follows that $\sum_{v \in V_1} f(v)  \ge m_1 + m_2 =m$.
Hence, if $f$ satisfies condition (i), then at least one of (\ref{dg1}) or (\ref{dg2}) holds.
Thus (\ref{unionLemma}) holds.
\end{proof}

\begin{Lemma}
\label{bunkaibunkai}
Let $G_1$ and $G_2$ be graphs with $m_1$ and $m_2$ vertices, respectively.
Suppose that $G_1$ and $G_2$ have no common vertices.
Then for the join $G= G_1 +G_2$ with $m = m_1+m_2$ vertices,
$F_{K_m}(k)$ is decomposed into the disjoint sets
$$
F_{K_m}(k)
=
F_{K_{m_1}+G_2}(k) \sqcup (F_{G_1+K_{m_2}}(k) \setminus F_G(k))
$$
as in Fig.~{\rm \ref{zu1}}.
\end{Lemma}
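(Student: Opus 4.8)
The plan is to read the asserted identity as the $k$-graded refinement of a purely set-theoretic decomposition that is already implicit in the previous lemma. Abbreviate $A=\bigsqcup_k F_{G_1+K_{m_2}}(k)$, $B=\bigsqcup_k F_{K_{m_1}+G_2}(k)$, $C=\bigsqcup_k F_{K_m}(k)$ and $D=\bigsqcup_k F_G(k)$. The previous lemma is exactly the statement $C=A\cup B$ and $D=A\cap B$, so inclusion--exclusion gives $C=B\sqcup(A\setminus D)$ on the level of underlying sets; this is the picture in Fig.~\ref{zu1}. Consequently it suffices to prove the two graded refinements $F_{K_{m_1}+G_2}(k)=B\cap F_{K_m}(k)$ and $F_{G_1+K_{m_2}}(k)\setminus F_G(k)=(A\setminus D)\cap F_{K_m}(k)$, and then intersect the set-level identity with $F_{K_m}(k)$; disjointness is automatic since $B$ and $A\setminus D$ are disjoint.

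The content of both refinements is that the number $\overline{\iota}$ of internally inactive hyperedges is unchanged when one passes among the four graphs $H\in\{G,\ G_1+K_{m_2},\ K_{m_1}+G_2,\ K_m\}$ on the common vertex set $V_1=V_{1,1}\sqcup V_{1,2}$ of the previous lemma. Two observations structure the comparison of the inactive sets $\eta_H(f)$. First, condition (ii) for $D(K_m)$ is vacuous, so $\eta_{K_m}(f)=\{v_j : j\ge 2,\ f(v_j)\ge 1\}$ is as large as possible. Second, enlarging the graph only enlarges the neighborhoods $\Gamma_{D(\cdot)}$ and hence weakens condition (ii), so $\eta_G(f)\subseteq\eta_{G_1+K_{m_2}}(f)\subseteq\eta_{K_m}(f)$ and similarly $\eta_{K_{m_1}+G_2}(f)\subseteq\eta_{K_m}(f)$. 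Each refinement therefore reduces to the reverse inclusion: that a vertex $v_j$ inactive for $K_m$ stays inactive for the smaller graph, i.e. that the shift index $j'<j$ can be chosen so that the function $g$ of condition (iii) (raise $f$ by $1$ at $v_{j'}$, lower it by $1$ at $v_j$) still satisfies (\ref{dg1}) and/or (\ref{dg2}). For $f\in B$ this is immediate, because the free part $V_{1,1}$ of $K_{m_1}+G_2$ comes first in the vertex order: taking $j'=1$ never increases any sum over a subset of the constrained part $V_{1,2}$, so $\eta_{K_{m_1}+G_2}(f)=\eta_{K_m}(f)$ and the first refinement follows.

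The delicate point---and the step I expect to be the main obstacle---is the second refinement, because in $G_1+K_{m_2}$ the free part $V_{1,2}$ comes last, so an early witness $j'$ is not automatic. Here I would exploit that the tight sets of (\ref{dg1}) inside $V_{1,1}$ form a lattice: the defect $V'\mapsto |\Gamma_{D(G_1)}(V')|+m_2-1-\sum_{v\in V'}f(v)$ is submodular and nonnegative on $A$, so there is a well-defined maximal tight set $W\subseteq V_{1,1}$. The crux is a mass estimate. For $f\in A\setminus D$ condition (\ref{dg2}) fails, so some nonempty $V''\subseteq V_{1,2}$ has $\sum_{v\in V''}f(v)\ge |\Gamma_{D(G_2)}(V'')|+m_1\ge |V''|+m_1\ge m_1+1$; since the total is $m-1$, this forces $\sum_{v\in V_{1,1}}f(v)\le m_2-2$. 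But a nonempty tight $W$ would satisfy $\sum_{v\in W}f(v)=|\Gamma_{D(G_1)}(W)|+m_2-1\ge m_2$, a contradiction, so $W=\emptyset$. With no nonempty tight set every shift preserves (\ref{dg1}), whence $\eta_{G_1+K_{m_2}}(f)=\eta_{K_m}(f)$ for $f\in A\setminus D$; this identifies $(A\setminus D)\cap F_{K_m}(k)$ with $F_{G_1+K_{m_2}}(k)\setminus D$.

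It remains to upgrade $F_{G_1+K_{m_2}}(k)\setminus D$ to $F_{G_1+K_{m_2}}(k)\setminus F_G(k)$, i.e. to show $\eta_G(f)=\eta_{G_1+K_{m_2}}(f)$ for $f\in D$. I would split on the location of $v_j$. If $v_j\in V_{1,1}$, then any admissible shift moves mass between two vertices of $V_{1,1}$ and so leaves every sum over a subset of $V_{1,2}$ untouched; hence (\ref{dg2}) is automatically preserved and the inactivity conditions for $G$ and for $G_1+K_{m_2}$ coincide verbatim. If $v_j\in V_{1,2}$ with $f(v_j)\ge1$, then $\sum_{v\in V_{1,2}}f(v)\ge1$ gives $W\ne V_{1,1}$, so any $v_{j'}\in V_{1,1}\setminus W$ (necessarily before $v_j$, as $j>m_1$) yields a shift keeping both (\ref{dg1}) and (\ref{dg2}), so $v_j\in\eta_G(f)$ as well. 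Combining these grading identities with the set-level decomposition $C=B\sqcup(A\setminus D)$ completes the proof.
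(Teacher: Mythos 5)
Your proof is correct, and most of its bookkeeping coincides with the paper's: the identity $F_{K_{m_1}+G_2}(k)=B\cap F_{K_m}(k)$ via the universal witness $j'=1$, the mass estimate $\sum_{v\in V_{1,1}}f(v)\le m_2-2$ showing that elements of $A\setminus D$ behave like elements of $F_{K_m}(k)$, and the final inclusion--exclusion assembly are exactly the paper's Claims 1, 3 and 4. Where you genuinely diverge is the hardest step, the paper's Claim~2 (equivalently your statement that $\eta_{D(G)}(f)=\eta_{D(G_1+K_{m_2})}(f)$ for $f\in D$, which yields $F_G(k)\subseteq F_{G_1+K_{m_2}}(k)$). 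The paper proves that every $v_j\in V_{1,2}$ with $f(v_j)>0$ lies in $\eta_{D(G)}(f)$ by returning to the spanning-tree definition of a hypertree: it normalizes a spanning tree $\Gamma$ of $D(G)$ inducing $f$ into the form $S\sqcup T\sqcup\{e\}$ and performs an explicit edge exchange, with a three-case analysis on the degree of the auxiliary vertex $v_k'$. You instead stay entirely inside the lattice-point description of Proposition~\ref{coef_interior}: the defect $V'\mapsto|\Gamma_{D(G_1)}(V')|+m_2-1-\sum_{v\in V'}f(v)$ is submodular and nonnegative, so its tight sets are closed under union and there is a maximal tight set $W$; since tightness of $V_{1,1}$ would force $\sum_{v\in V_{1,1}}f(v)=m-1$, contradicting $f(v_j)\ge 1$ on $V_{1,2}$, one gets $W\subsetneq V_{1,1}$ and any $v_{j'}\in V_{1,1}\setminus W$ is a valid witness preserving both (\ref{dg1}) and (\ref{dg2}). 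This is shorter and avoids spanning trees altogether, at the price of invoking the submodularity of the neighborhood function (which you should state explicitly, since it is what makes ``the maximal tight set'' well defined); the paper's argument is longer but self-contained at the level of hypertrees. Both routes are sound.
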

\begin{figure}[h]
\includegraphics[width=10cm,pagebox=cropbox,clip]{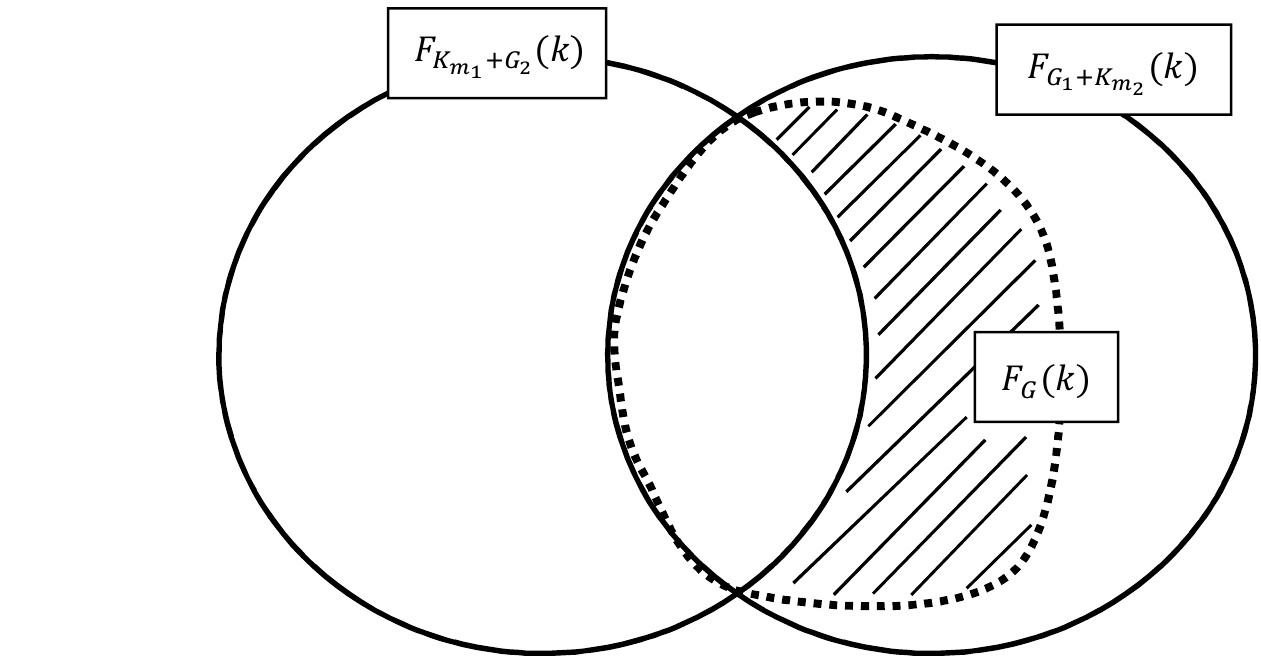}\hspace{2cm} 
\caption{Decomposition of $F_{K_m}(k)$}
\label{zu1}
\end{figure}

\begin{proof}
Let $V_1 \sqcup V_2$ with $V_1=\{v_1,\ldots, v_{m}\}$
and $V_2=\{v_1',\ldots, v_{m}'\}$ 
denote the common vertex set of 
$D(G)$, $D(G_1+K_{m_2})$, $D(K_{m_1}+G_2)$ and $D(K_m)$ $(=K_{m,m})$.
In addition, let 
$V_{1,1} =\{v_1,\ldots, v_{m_1}\}$, $V_{1,2} =\{v_{m_1+1},\ldots, v_m\}$,
$V_{2,1} =\{v_1',\ldots, v_{m_1}'\}$ and $V_{2,2} =\{v_{m_1+1}',\ldots, v_m'\}$,
where $V_{1,j} \sqcup V_{2,j}$ corresponds to the vertex set of $D(G_j)$ for $j =1,2$.

\bigskip

\noindent
{\bf Claim 1.} 
$F_{K_{m_1}+G_2}(k) \subset F_{K_m}(k)$.

Suppose that $f$ belongs to $F_{K_{m_1}+G_2}(k) \cup F_{K_m}(k)$.
Then
condition (ii) in Proposition~\ref{coef_interior} is independent from
the value $f(v_1)$ since $\deg (v_1) =m$.
Thus one can choose $j'=1$ for condition (iii) for any $j >1$, and hence we have
$$k = |\{v_j \in V_1 : v_j \ne v_1, f(v_j) >0\}|.$$
Therefore
\begin{equation}
F_{K_{m_1}+G_2}(k) \subset F_{K_m}(k) \label{hougan01}
\end{equation}
 and $F_{K_{m_1}+G_2}(\ell) \cap F_{K_m}(k) =\emptyset$
if $\ell \ne k$.

\bigskip

\noindent
{\bf Claim 2.} 
$F_G(k) \subset  F_{G_1+K_{m_2}}(k)$.

Suppose that $f \in F_G(k)$.
Then $f \in F_{G_1+K_{m_2}}(\ell)$ for some $\ell$.
Note that $k = |\eta_{D(G)} (f)|$ and $\ell = |\eta_{D(G_1+K_{m_2})} (f)|$
(where $\eta$ is defined in  Proposition~\ref{coef_interior}).
In order to prove $k=\ell$, we will show that
$\eta_{D(G)} (f) = \eta_{D(G_1+K_{m_2})} (f)$.
Suppose that $f(v_j) >0$ for $v_j \in V_1$ ($j \ne 1$).
If $j \le m_1+1$, then $v_j$ 
belongs to $\eta_{D(G)}(f)$ 
if and only if $v_j$ belongs to 
$\eta_{D(G_1+K_{m_2})} (f)$ since the vertex $v_{j'}$ in condition (iii) in Proposition~\ref{coef_interior}
should be chosen from $V_{1,1}$.
Let $j \ge m_1+2$.
Since 
$v_{m_1+1}$ belongs to $V_{1,2}$ with $m_1+1 < j$, 
(\ref{dg1}) holds for $g$ defined in 
condition (iii) of Proposition~\ref{coef_interior} where $v_{j'}=v_{m_1+1}$.
Thus $v_{j'}=v_{m_1+1}$ satisfies condition (iii) of Proposition~\ref{coef_interior}
for $v_j$ in $D(G_1+K_{m_2})$.
Hence $v_j$ belongs to 
$\eta_{D(G_1+K_{m_2})} (f)$.
We now show that
any $v_j$ ($j \ge m_1+2$) belongs to 
$\eta_{D(G)} (f)$, i.e.,
there exists $j' < j$ such that
$g: V_1 \rightarrow \ZZ_{\ge 0}$ defined by 
\begin{equation}
 g(v_i) = \begin{cases}
 f(v_i)+1 & (i=j') \\
f(v_i)-1 & (i=j) \\
f(v_i) & \mbox{(otherwise)} 
  \end{cases} \label{findg}
\end{equation}
is a hypertree in $D(G)$.
Let $\Gamma$ be a spanning tree of $D(G)$ that induces $f$.
Suppose that $\Gamma$ does not contain an edge $e=\{v_k, v_1'\}$ for some $m_1< k \le m$.
Then $\Gamma \cup \{e\}$ has a unique cycle, and the cycle contains $v_k$.
Let $e'$ be the edge of the cycle that is adjacent to $v_k$
 but different from $e$.
Then $(\Gamma \cup \{e'\}) \setminus \{e\}$ induces $f$.
Thus we may assume that $\{ \{v_k, v_1'\} :   m_1 <k\le m \} \subset \Gamma$.
By the same argument, 
we may assume that 
$\{ \{v_k, v_{m_1+1}'\} :  1\le k \le m_1\}$ is a subset of $\Gamma$.
Hence
$$S=\{ \{v_1, v_{m_1+1}'\} ,\ldots, \{v_{m_1}, v_{m_1+1}'\} ,
\{v_{m_1+1},v_1'\}, \ldots, \{v_{m},v_1'\} \}$$
is a subset of $\Gamma$.
Since $\Gamma$ is spanning, 
$$T=
\{
\{v_{i_2}, v_2'\} , \ldots, \{v_{i_{m_1}}, v_{m_1}'\},
\{v_{i_{m_1+2}}, v_{m_1+2}'\} , \ldots, \{v_{i_{m}}, v_m'\} 
\}$$
is a subset of $\Gamma$ for some $1 \le i_2, \ldots, i_{m_1}, i_{m_1+2}, \ldots,i_m \le m$.
Then $S\cap T = \emptyset$ and $|S \cup T| = 2m-2$.
Since $\Gamma$ has $2m-1$ edges, we have
$$
\Gamma= S \sqcup T \sqcup \{e\}
$$
for some edge $e$ of $D(G)$.
In particular, the degree of each vertex $v_i'$ ($i \in [m] \setminus \{1,m_1+1\}$) in $\Gamma$ is at most $2$.
Since $f(v_j) >0$, the degree of $v_j$ in $\Gamma$ is at least 2,
 and hence $\{v_j,v_k'\}$ is an edge of $\Gamma$ for some $k \in [m]\setminus \{1\}$.
Let $e'=\{v_j,v_k'\}$.
We now construct a spanning tree of $D(G)$ that induces a hypertree of the form (\ref{findg}).
Note that, for an edge $e''$ of $D(G)$, 
$\Gamma'=(\Gamma \cup \{e''\} ) \setminus \{e'\}$ is a spanning tree of $D(G)$ 
if $e'' \notin \Gamma$ and the (unique) cycle of $\Gamma \cup \{e''\} $ contains $e'$.

\bigskip

\noindent
{\bf Case A} ($k =m_1+1$){\bf .}
Then $\Gamma$ contains a path $(v_1', v_j, v_{m_1+1}', v_1)$.
Since $\Gamma$ has no cycles, $\{v_1,v_1'\}$ is not an edge of $\Gamma$.
Hence $(\Gamma \cup \{v_1, v_1'\} ) \setminus \{e'\}$
is a spanning tree of $D(G)$ that induces a hypertree of the form (\ref{findg}) where $j'=1$.

\bigskip

\noindent
{\bf Case B} ($k \ne m_1+1$ and $\deg (v_k') =1$){\bf .}
If $2 \le k \le m_1$, then $(\Gamma \cup \{v_k, v_k'\} ) \setminus \{e'\}$
is a spanning tree of $D(G)$ that induces a hypertree of the form (\ref{findg}) where $j'=k$.
If $m_1+2 \le k \le m$, then $(\Gamma \cup \{v_1, v_k'\} ) \setminus \{e'\}$
is a spanning tree of $D(G)$ that induces a hypertree of the form (\ref{findg}) where $j'=1$.

\bigskip

\noindent
{\bf Case C} ($k \ne m_1+1$ and $\deg (v_k') =2$){\bf .}
Then $\Gamma$ contains an edge $\{v_\ell,v_k'\}$ for some $\ell \ne j$.
If $m_1 < \ell \le m$, then $\Gamma$ contains a cycle $(v_1', v_j, v_k', v_\ell, v_1')$ of length 4.
This is a contradiction.
Hence $\ell \le m_1$.
Then $\Gamma$ contains a path $(v_1', v_j, v_k', v_\ell ,v_{m_1+1}', v_1)$.
Since $\Gamma$ has no cycles, $\{v_1,v_1'\}$ is not an edge of $\Gamma$.
Hence $(\Gamma \cup \{v_1, v_1'\} ) \setminus \{e'\}$
is a spanning tree of $D(G)$ that induces a hypertree of the form (\ref{findg}) where $j'=1$.

\bigskip

\noindent
Thus we have $k=\ell$, and hence
\begin{equation}
\label{bubun1}
F_G(k) \subset  F_{G_1+K_{m_2}}(k).
\end{equation}

\bigskip

\noindent
{\bf Claim 3.} 
$(F_{G_1+K_{m_2}}(k) \setminus F_G(k)) \subset F_{K_m}(k)$.

Suppose that $f$ belongs to $F_{G_1+K_{m_2}}(k) \setminus F_G(k)$.
Then there exists $V' \subset V_{1,2}$ such that
$$
\sum_{v \in V'} f(v) \ge |\Gamma_{D(G_2)}(V')| +m_1.
$$
In particular, $\sum_{v \in V_{1,2}} f(v) > m_1$.
Since $\sum_{v \in V_1} f(v) = m -1$, we have $\sum_{v \in V_{1,1}} f(v) < m_2 - 1$.
Hence
$$
\sum_{v \in V''} f(v) < m_2 - 1 <  |\Gamma_{D(G_1)}(V'')| + m_2 - 1
$$
for all $V''  \subset V_{1,1}$.
Thus, for each $v_j \ne v_1$ with $f(v_j) >0$,
$g: V_1 \rightarrow \ZZ_{\ge 0}$ defined by 
\begin{equation*}
 g(v_i) = \begin{cases}
 f(v_i)+1 & (i=1) \\
f(v_i)-1 & (i=j) \\
f(v_i) & \mbox{(otherwise)} 
  \end{cases} 
\end{equation*}
satisfies
$$
\sum_{v \in V''} g(v) \le
1+\sum_{v \in V''} f(v) \le  |\Gamma_{D(G_1)}(V'')| + m_2 - 1
$$
for all $V''  \subset V_{1,1}$.
Therefore we have
$$k = |\{v_j \in V_1 : v_j \ne v_1, f(v_j) >0\}|,$$
and hence
\begin{equation}
(F_{G_1+K_{m_2}}(k) \setminus F_G(k)) \subset F_{K_m}(k). \label{hougan02}
\end{equation}

\bigskip

\noindent
{\bf Claim 4.} 
$F_{K_m}(k)
=
F_{K_{m_1}+G_2}(k) \cup (F_{G_1+K_{m_2}}(k) \setminus F_G(k))$.

From (\ref{hougan01}) and (\ref{hougan02}),
$$
F_{K_m}(k)
\supset
F_{K_{m_1}+G_2}(k) \cup (F_{G_1+K_{m_2}}(k) \setminus F_G(k)).
$$
Let $f \in F_{K_m}(k) \setminus F_{K_{m_1}+G_2}(k) $.
Since $F_{K_{m_1}+G_2}(k') \cap F_{K_m}(k) =\emptyset$
for any $k' \ne k$, it follows that
$f \notin F_{K_{m_1}+G_2}(\ell) $ for any $\ell$.
Hence from (\ref{unionLemma}),
 $f \in F_{G_1+K_{m_2}}(\ell)$ for some $\ell$.
If $f \in F_G(\ell)$, then 
$f \in F_{K_{m_1}+G_2}(\ell') $ for some $\ell'$
by (\ref{intersectionLemma}).
This is a contradiction.
Thus $f \in F_{G_1+K_{m_2}}(\ell) \setminus  F_G(\ell)$.
From (\ref{hougan02}), we have
$$
f \in  F_{G_1+K_{m_2}}(\ell) \setminus  F_G(\ell)
\subset 
F_{K_m}(\ell). 
$$
Then $\ell =k$.
It follows that
$$
F_{K_m}(k)
=
F_{K_{m_1}+G_2}(k) \cup (F_{G_1+K_{m_2}}(k) \setminus F_G(k)).
$$

Finally, we show that this is a decomposition.
Suppose that $f \in  F_{G_1+K_{m_2}}(k)   \cap  F_{K_{m_1}+G_2}(k)$.
From (\ref{intersectionLemma}), $f \in F_G(\ell)$ for some $\ell$.
Moreover, from (\ref{bubun1}), we have $\ell = k$.
Thus
$$
 F_{G_1+K_{m_2}}(k)  \cap  F_{K_{m_1}+G_2}(k) \subset  F_G(k).
$$
Therefore $F_{K_m}(k)$ is decomposed into the disjoint sets
$$
F_{K_m}(k)
=
F_{K_{m_1}+G_2}(k) \sqcup (F_{G_1+K_{m_2}}(k) \setminus F_G(k))
$$
 as in Fig.~\ref{zu1}.
\end{proof}

Now, we are in the position to give a proof of Theorem~\ref{mainthm}.
\begin{proof}[Proof of Theorem~\ref{mainthm}]
We prove this by induction on $s$.
First we discuss the case when $s=2$, i.e., $G= G_1+G_2$.
It is known \cite[Example~5.3]{KalPos} that 
$$|F_{K_m}(k)| = \binom{m - 1}{k}^2.$$
From Lemma~\ref{bunkaibunkai},
$$
\binom{m - 1}{k}^2 = |F_{K_{m_1}+G_2}(k)| + (|F_{G_1+K_{m_2}}(k) | - |F_G(k)| ).
$$
Thus we have
$$
I_{D(G)}(x)=I_{D(G_1+K_{m_2})}(x) + I_{D(G_2+K_{m_1})}(x) -\sum_{k=0}^{m -1} 
\binom{m-1}{k}^2
x^k.
$$

Let $s>2$ and assume that the assertion holds for the join of at most $s-1$ graphs.
Since $G = (G_1 + \cdots + G_{s-1}) + G_s$, we have
\begin{eqnarray*}
I_{D(G)}(x) &=&
I_{D((G_1 + \cdots + G_{s-1})+K_{m_s})}(x) + I_{D(G_s+K_{m - m_s})}(x) -\sum_{k=0}^{m -1} 
\binom{m-1}{k}^2
x^k\\
&=&
\sum_{i=1}^{s-2} I_{D(G_i+K_{m-m_i})}(x) 
+
I_{D((G_{s-1}+K_{m_s})+K_{m-m_{s-1}-m_s})}(x) 
\\
& & -
(s-2)\sum_{k=0}^{m -1} 
\binom{m-1}{k}^2
x^k+ I_{D(G_s+K_{m - m_s})}(x) -\sum_{k=0}^{m -1} 
\binom{m-1}{k}^2
x^k\\
&=& \sum_{i=1}^s I_{D(G_i+K_{m-m_i})}(x) -
(s-1)\sum_{k=0}^{m -1} 
\binom{m-1}{k}^2
x^k.
\end{eqnarray*}
From Proposition~\ref{prop:matchinginterior}
and Lemma~\ref{suspensionLemma},
this is equal to equation (\ref{main2}). 
\end{proof}

\section{Complete multipartite graphs}
\label{sec:mult}

In this section, 
applying Theorem~\ref{mainthm}, 
we give explicit formulas for the $h^*$-polynomial 
and the normalized volume of
the PQ-type adjacency polytope of a complete multipartite graph.

Given positive integers $\ell$ and $m$,
let
$$
f_{\ell,m}(x) = \sum_{k=0}^{\ell + m-1} 
\sum_{\alpha =0}^{k} 
\binom{\ell-1}{k-\alpha}
\binom{m}{\alpha}
\sum_{\beta=\alpha}^{k}
\binom{\ell+\alpha-1}{\beta}
\binom{m-\alpha}{k-\beta}
x^k.
$$
Since
\begin{eqnarray*}
&&
\sum_{k=0}^{\ell + m-1} 
\sum_{\alpha =0}^{k} 
\binom{\ell-1}{k-\alpha}
\binom{m}{\alpha}
\sum_{\beta=0}^{k}
\binom{\ell+\alpha-1}{\beta}
\binom{m-\alpha}{k-\beta}x^k\\
&=&
\sum_{k=0}^{\ell + m-1} 
\sum_{\alpha =0}^{k} 
\binom{\ell-1}{k-\alpha}
\binom{m}{\alpha}
\binom{\ell+m-1}{k}x^k\\
&=&
\sum_{k=0}^{\ell + m-1} 
\binom{\ell+m-1}{k}
\sum_{\alpha =0}^{k} 
\binom{\ell-1}{k-\alpha}
\binom{m}{\alpha}x^k
\\
&=&
\sum_{k=0}^{\ell + m-1} 
\binom{\ell+m-1}{k}^2x^k
\end{eqnarray*}
holds,  we have
$$
f_{\ell,m}(x) = 
\sum_{k=0}^{\ell+m -1} 
\binom{\ell+m-1}{k}^2 x^k
-\sum_{k=1}^{m-1} 
\sum_{\alpha =1}^{k} 
\binom{\ell-1}{k-\alpha}
\binom{m}{\alpha}
\sum_{\beta=0}^{\alpha-1}
\binom{\ell+\alpha-1}{\beta}
\binom{m-\alpha}{k-\beta}
x^k.
$$
The $h^*$-polynomial of $\nabla_G^{\rm PQ}$ for the graph
$G= K_\ell +E_m $ $(=K_{1,\dots,1,m})$ coincides with $f_{\ell,m} (x)$.

\begin{Theorem}
\label{summand}
Let $G= K_\ell +E_m $.
Then we have
\begin{eqnarray*}
h^*(\nabla_G^{\rm PQ},x) &=&f_{\ell,m}(x),\\
{\rm Vol}(\nabla_G^{\rm PQ}) &=& f_{\ell,m}(1)
=
\sum_{\alpha =0}^{m} 
\binom{m}{\alpha}
\sum_{\beta=0}^{\ell-1}
\binom{\ell+\alpha-1}{\beta}
\binom{\ell+m-\alpha-1}{\beta}.
\end{eqnarray*}
\end{Theorem}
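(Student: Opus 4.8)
The plan is to combine Proposition~\ref{hpoly=interior} with a direct application of Proposition~\ref{coef_interior} to the bipartite graph $D(K_\ell+E_m)$. Since $G=K_\ell+E_m$ is connected, Proposition~\ref{hpoly=interior} gives $h^*(\nabla_G^{\rm PQ},x)=I_{D(G)}(x)$, and by Proposition~\ref{coef_interior} the coefficient of $x^k$ is $|F_{K_\ell+E_m}(k)|$. I would order the left class $V_1=\{v_1,\dots,v_{\ell+m}\}$ so that $v_1,\dots,v_\ell$ are the $K_\ell$-vertices and $v_{\ell+1},\dots,v_{\ell+m}$ the $E_m$-vertices, writing $b_i=f(v_i)$ and $a_j=f(v_{\ell+j})$. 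Reading off the neighborhoods in $D(G)$, each $K_\ell$-vertex is adjacent to all of $V_2$, while each $E_m$-vertex $v_{\ell+j}$ is adjacent only to $v_1',\dots,v_\ell'$ and $v_{\ell+j}'$. Hence any $V'$ meeting the $K_\ell$-part has $\Gamma_{D(G)}(V')=V_2$, so granted (i) condition (ii) is automatic there, and for $S\subseteq[m]$ one has $|\Gamma_{D(G)}(\{v_{\ell+j}:j\in S\})|=\ell+|S|$. Thus, granted (i), condition (ii) collapses to the single family $\sum_{j\in S}a_j\le\ell+|S|-1$ for all $S\subseteq[m]$, whose binding instance (since positive support entries are $\ge1$) is $\sum_{a_j>0}a_j\le\ell+\alpha-1$, where $\alpha=\#\{j:a_j>0\}$.

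Next I would dispose of condition (iii). Because $v_1$ has full neighborhood, moving one unit from any $v_j$ with $j\ge2$ and $f(v_j)>0$ to $v_1$ preserves (ii): it only lowers the $E_m$-sums and leaves the $K_\ell$-constraints vacuous. Therefore every such $v_j$ lies in $\eta_{D(G)}(f)$, giving $\overline{\iota}(f)=\#\{j\ge2:f(v_j)>0\}$. This is precisely the phenomenon already isolated in Claim~1 of the proof of Lemma~\ref{bunkaibunkai} for graphs of the form $K_{m_1}+G_2$, which I would cite rather than reprove.

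With these reductions the count is pure enumeration. Stratifying by $\alpha$ and by the number $k-\alpha$ of nonzero $b_i$ with $i\ge2$ produces the support factor $\binom{m}{\alpha}\binom{\ell-1}{k-\alpha}$; writing $A=\sum_{a_j>0}a_j$, the positive $a$'s contribute $\binom{A-1}{\alpha-1}$ and the remaining mass $\ell+m-1-A$ distributed over $v_1$ (free) and the $k-\alpha$ chosen positive $b$'s contributes $\binom{\ell+m-1-A}{k-\alpha}$, with $A$ confined to $[\alpha,\ell+\alpha-1]$ by the step-one bound. A Vandermonde (Chu) convolution then rewrites the value count as $\sum_{\beta=\alpha}^{k}\binom{\ell+\alpha-1}{\beta}\binom{m-\alpha}{k-\beta}$, which is exactly the coefficient of $x^k$ in $f_{\ell,m}(x)$; the cutoff $\beta\ge\alpha$ encodes the $E_m$-constraint, and the complementary range $\beta<\alpha$ is the ``violator'' correction. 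This dovetails with the identity recorded before the theorem, $f_{\ell,m}(x)=\sum_k\binom{\ell+m-1}{k}^2x^k-(\text{correction})$, and with the comparison $F_{K_\ell+E_m}(k)\subseteq F_{K_{\ell+m}}(k)$ where $|F_{K_{\ell+m}}(k)|=\binom{\ell+m-1}{k}^2$.

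Finally, the volume is $f_{\ell,m}(1)=\sum_k|F_{K_\ell+E_m}(k)|$. Setting $x=1$ and summing over $k$ first, Vandermonde applied to $\sum_k\binom{\ell-1}{k-\alpha}\binom{m-\alpha}{k-\beta}$, followed by the symmetry $\binom{n}{r}=\binom{n}{n-r}$ and a reindexing $\beta\mapsto\ell-1-(\beta-\alpha)$, collapses the triple sum to $\sum_{\alpha=0}^{m}\binom{m}{\alpha}\sum_{\beta=0}^{\ell-1}\binom{\ell+\alpha-1}{\beta}\binom{\ell+m-\alpha-1}{\beta}$, the stated closed form. I expect the main obstacle to be the bookkeeping of the third step: reshaping the naive value count $\sum_{A}\binom{A-1}{\alpha-1}\binom{\ell+m-1-A}{k-\alpha}$ into the exact form $\sum_{\beta}\binom{\ell+\alpha-1}{\beta}\binom{m-\alpha}{k-\beta}$ of $f_{\ell,m}$, and making precise that the constraint $A\le\ell+\alpha-1$ corresponds to the cutoff $\beta\ge\alpha$. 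The volume identity, though somewhat long, is then a routine Vandermonde collapse.
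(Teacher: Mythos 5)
Your proof is correct, but it takes a genuinely different route from the paper's. The paper never invokes Proposition~\ref{coef_interior} here: it writes $K_\ell+E_m=(K_{\ell-1}+E_m)+K_1$ and applies Proposition~\ref{sus_matchable}, so that $h^*(\nabla_G^{\rm PQ},x)$ becomes the perfectly matchable set polynomial of $G'=D(K_{\ell-1}+E_m)$, whose coefficients are counted as $(0,1)$-vectors satisfying the Balas--Pulleyblank inequalities of Proposition~\ref{prop:PMS_inequalities}. In that count the parameter $\beta$ is literally the number of chosen $y$-coordinates inside $\Gamma_{G'}(S')$, a set of size $\ell+\alpha-1$, so the summand $\binom{\ell+\alpha-1}{\beta}\binom{m-\alpha}{k-\beta}$ with cutoff $\beta\ge\alpha$ appears immediately and no identity is needed. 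You instead stay with the interior polynomial of $D(K_\ell+E_m)$ and count the functions of Proposition~\ref{coef_interior} directly; your reduction of condition (ii) to the single constraint $\sum_{a_j>0}a_j\le\ell+\alpha-1$ and the identification $\overline{\iota}(f)=\#\{j\ge 2: f(v_j)>0\}$ (correctly borrowed from Claim~1 in the proof of Lemma~\ref{bunkaibunkai}, since $K_\ell+E_m$ has the form $K_{m_1}+G_2$) are both sound. The price is the final conversion: your value count is $\sum_{A=\alpha}^{\ell+\alpha-1}\binom{A-1}{\alpha-1}\binom{\ell+m-1-A}{k-\alpha}$, and turning it into $\sum_{\beta=\alpha}^{k}\binom{\ell+\alpha-1}{\beta}\binom{m-\alpha}{k-\beta}$ is not a plain Vandermonde convolution, because both sums are truncated. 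It is nevertheless true: both sides count the $k$-subsets of $[\ell+m-1]$ whose $\alpha$-th smallest element lies in $[\ell+\alpha-1]$, the left side classifying by the value $A$ of that element and the right side by the intersection size $\beta$ with the initial segment $[\ell+\alpha-1]$. Spelling this out closes the one step you flagged as delicate; the concluding volume computation is then the same routine Vandermonde collapse the paper performs. In short, the paper's detour through Proposition~\ref{sus_matchable} buys a parametrization in which $f_{\ell,m}$ appears verbatim, while your direct hypertree count avoids the suspension machinery at the cost of one extra combinatorial identity.
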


\begin{proof}
Let $G'=D(K_{\ell-1}+E_m)$.
Since $G =(K_{\ell-1}+E_m) +K_1$,
from Proposition~\ref{sus_matchable}, we have
$$
h^*(\nabla_G^{\rm PQ},x)=
\PMS(G',x)=
\sum_{k = 0}^{\ell + m-1} \left|\pmset(G' ,k)\right| x^k.
$$
Let $n =\ell+m-1$
and let $[n] \cup [\overline{n}]$ be the vertex set of $G'$.
We decompose $[n]$ into two disjoint set $V_{1,1} = [\ell-1]$ and $V_{1,2} = [n]\setminus [\ell-1]$ where $V_{1,1}$ (resp.~$V_{1,2}$) corresponds to $K_{\ell-1}$
(resp.~$E_m$).
From Proposition~\ref{prop:PMS_inequalities},
each $|\pmset(G' ,k)|$ is the number of $(0,1)$-vectors
$(x_1,\ldots,x_n, y_1,\ldots,y_n) \in \RR^{2n}$ such that
\begin{eqnarray}
\sum_{i = 1}^n x_i &=& \sum_{i = 1}^n y_i = k, \label{wa0}\\ 
\sum_{i \in S} x_i  &\le& \sum_{\overline{j} \in \Gamma_{G'}(S)} y_j \ \  \mbox{ for all } S \subset [n].\label{ue0}
\end{eqnarray}
If (\ref{wa0}) holds and a subset $S \subset [n]$ contains an element of $V_{1,1}$, then we have $\Gamma_{G'}(S)=[\overline{n}]$, and 
hence 
$$ \sum_{\overline{j} \in \Gamma_{G'}(S)} y_j=k
=\sum_{i = 1}^n x_i \ge \sum_{i \in S} x_i 
.$$
Thus (\ref{wa0}) and (\ref{ue0}) hold if and only if
\begin{eqnarray}
\sum_{i = 1}^n x_i &=& \sum_{i = 1}^n y_i = k, \label{wa}\\ 
\sum_{i \in S} x_i  &\le& \sum_{\overline{j} \in \Gamma_{G'}(S)} y_j \ \  \mbox{ for all } S \subset V_{1,2}. \label{ue}
\end{eqnarray}
We count such vectors with $|\{ i \in V_{1,2} : x_i = 1 \}|=\alpha$
for each $\alpha = 0,1,\ldots, k$.
There are $\binom{m}{\alpha}$ possibilities for the choice of the subset 
$S'=\{ i \in V_{1,2} : x_i = 1 \} \subset V_{1,2}$.
For each  $S'$, there are $\binom{\ell-1}{k-\alpha}$ 
possibilities for the choice of the subset 
$\{ i \in V_{1,1} : x_i = 1 \} \subset V_{1,1}$.
Then equations (\ref{wa}) and (\ref{ue}) hold if and only if 
$\sum_{i = 1}^n y_i = k$ and $\alpha \le \sum_{\overline{j} \in \Gamma_{G'}(S')} y_j$.
Let $\beta = \sum_{\overline{j} \in \Gamma_{G'}(S')} y_j$.
Since $|\Gamma_{G'}(S')| = \ell + \alpha -1$, there are 
$\binom{\ell+\alpha-1}{\beta}$ possibilities for the choice of the subset 
$S''=\{ \overline{j} \in \Gamma_{G'}(S') : y_j = 1 \} \subset\Gamma_{G'}(S')$.
For each  $S''$, there are $\binom{m-\alpha}{k-\beta}$ 
possibilities for the choice of the subset 
$\{ \overline{j} \in [\overline{n}] \setminus \Gamma_{G'}(S') : y_j = 1 \} \subset[\overline{n}] \setminus \Gamma_{G'}(S')$.
Thus we have $I_{D(G)}(x)=f_{\ell,m}(x)$.

Moreover,  the normalized volume of $\nabla_G^{\rm PQ}$ is equal to
\begin{eqnarray*}
f_{\ell,m}(1)
&=&
\sum_{k=0}^{\ell + m-1} 
\sum_{\alpha =0}^{k} 
\binom{\ell-1}{k-\alpha}
\binom{m}{\alpha}
\sum_{\beta=\alpha}^{k}
\binom{\ell+\alpha-1}{\beta}
\binom{m-\alpha}{k-\beta}\\
&=&
\sum_{\alpha =0}^{m} 
\binom{m}{\alpha}
\sum_{\beta=\alpha}^{\ell+\alpha-1}
\binom{\ell+\alpha-1}{\beta}
\sum_{k=\beta}^{\ell + \alpha-1} 
\binom{\ell-1}{k-\alpha}
\binom{m-\alpha}{k-\beta}\\
&=&
\sum_{\alpha =0}^{m} 
\binom{m}{\alpha}
\sum_{\beta=\alpha}^{\ell+\alpha-1}
\binom{\ell+\alpha-1}{\beta}
\sum_{k=\beta}^{\ell + \alpha-1} 
\binom{\ell-1}{\ell-1+\alpha-\beta-(k-\beta)}
\binom{m-\alpha}{k-\beta}\\
&=&
\sum_{\alpha =0}^{m} 
\binom{m}{\alpha}
\sum_{\beta=\alpha}^{\ell+\alpha-1}
\binom{\ell+\alpha-1}{\beta}
\binom{\ell+m-\alpha-1}{\ell+\alpha-\beta-1}\\
&=&
\sum_{\alpha =0}^{m} 
\binom{m}{\alpha}
\sum_{\beta'=0}^{\ell-1}
\binom{\ell+\alpha-1}{\beta'}
\binom{\ell+m-\alpha-1}{\beta'}.
\end{eqnarray*}
\end{proof}

\begin{Remark}
Let $G= K_\ell +E_m $.
Since
$$
\sum_{\alpha =0}^{m} 
\binom{m}{\alpha}
\sum_{\beta=0}^{\ell+\alpha-1}
\binom{\ell+\alpha-1}{\beta}
\binom{\ell+m-\alpha-1}{\beta}
=
\binom{2(\ell+m-1)}{\ell+m-1},
$$
we have
$$
{\rm Vol}(\nabla_G^{\rm PQ}) =
\binom{2(\ell+m-1)}{\ell+m-1}-
\sum_{\alpha =1}^{m-1} 
\binom{m}{\alpha}
\sum_{\beta=\ell}^{\ell+\alpha-1}
\binom{\ell+\alpha-1}{\beta}
\binom{\ell+m-\alpha-1}{\beta}.
$$
\end{Remark}

\begin{Example}
\label{exaofexa}
For small $\ell$ and $m$, $f_{\ell, m}(1)$ in Theorem~\ref{summand} is
\begin{eqnarray*}
f_{1,m}(1) &=& 2^m,\\
f_{2,m}(1) &=& 2^{m-2} \left(m^2+3 m+8\right),\\
f_{3,m}(1) &=& \frac{2^{m-4} }{(2!)^2} \left(m^4+10 m^3+59 m^2+186 m+384\right),\\
f_{4,m}(1) &=& \frac{2^{m-6}}{(3!)^2}  \left(m^6+21 m^5+229 m^4+1563 m^3+7762 m^2+24984 m+46080\right),\\
\\
f_{\ell,1}(1) &=& \binom{2\ell}{\ell},\\
f_{\ell,2}(1) &=& \binom{2(\ell+1)}{\ell+1}-2,\\
f_{\ell,3}(1) &=& \binom{2(\ell+2)}{\ell+2}-(6\ell+6),\\
f_{\ell,4}(1) &=& \binom{2(\ell+3)}{\ell+3}-(10\ell^2+24\ell +20).
\end{eqnarray*}
\end{Example}

From Theorems~\ref{mainthm} and \ref{summand}, we have the following.

\begin{Corollary}
\label{mainCor}
Let $G$ be a complete multipartite graph $K_{m_1,\dots,m_s}$,
and let $m= \sum_{i=1}^s m_i$.
Then we have
\begin{eqnarray*}
h^*(\nabla_G^{\rm PQ},x)&=& 
\sum_{i=1}^s f_{m-m_i,m_i}(x)
- (s-1)\sum_{k=0}^{m -1} 
\binom{m-1}{k}^2
x^k,\\
{\rm Vol} (\nabla_G^{\rm PQ}) &=&
\sum_{i=1}^s f_{m-m_i,m_i}(1) - (s-1)
\binom{2(m-1)}{m-1}.
\end{eqnarray*}
\end{Corollary}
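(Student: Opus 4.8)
The plan is to realize $K_{m_1,\dots,m_s}$ as a join of empty graphs and then feed the building-block $h^*$-polynomials supplied by Theorem~\ref{summand} into the master formula of Theorem~\ref{mainthm}. First I would observe that the complete multipartite graph decomposes as the join $K_{m_1,\dots,m_s} = E_{m_1} + \cdots + E_{m_s}$, where $E_{m_i}$ is the empty graph on $m_i$ vertices and the $E_{m_i}$ are taken on pairwise disjoint vertex sets; indeed, joining these empty graphs inserts precisely all edges between distinct parts and no edges within a part, which is exactly the defining property of $K_{m_1,\dots,m_s}$. This places us squarely in the setting of Theorem~\ref{mainthm} with $G_i = E_{m_i}$ and $m = \sum_{i=1}^s m_i$.

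Next I would identify each summand. Applying Theorem~\ref{mainthm} gives
$$h^*(\nabla_G^{\rm PQ},x) = \sum_{i=1}^s h^*(\nabla_{E_{m_i}+K_{m-m_i}}^{\rm PQ},x) - (s-1)\sum_{k=0}^{m-1}\binom{m-1}{k}^2 x^k.$$
Since $E_{m_i} + K_{m-m_i} = K_{m-m_i} + E_{m_i}$ is of the form $K_\ell + E_{m'}$ with $\ell = m - m_i$ and $m' = m_i$, Theorem~\ref{summand} yields $h^*(\nabla_{E_{m_i}+K_{m-m_i}}^{\rm PQ},x) = f_{m-m_i,m_i}(x)$. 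Substituting these into the displayed identity immediately produces the claimed formula for $h^*(\nabla_G^{\rm PQ},x)$.

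Finally, for the normalized volume I would evaluate at $x=1$, using the general fact that $h^*(\Pc,1) = {\rm Vol}(\Pc)$. The $h^*$-part then becomes $\sum_{i=1}^s f_{m-m_i,m_i}(1)$, while the correction term $\sum_{k=0}^{m-1}\binom{m-1}{k}^2$ collapses to $\binom{2(m-1)}{m-1}$ by the Vandermonde identity, giving the stated volume formula. There is essentially no serious obstacle here, since the mathematical content resides entirely in the two quoted theorems; the only points requiring a moment's care are the correct matching of the two indices when invoking Theorem~\ref{summand} (the complete side has $m - m_i$ vertices, the empty side $m_i$ vertices) and the Vandermonde evaluation of the binomial sum at $x=1$.
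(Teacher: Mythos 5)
Your proposal is correct and matches the paper's intended argument exactly: the paper derives Corollary~\ref{mainCor} precisely by writing $K_{m_1,\dots,m_s}=E_{m_1}+\cdots+E_{m_s}$, applying Theorem~\ref{mainthm} with $G_i=E_{m_i}$, substituting $h^*(\nabla^{\rm PQ}_{E_{m_i}+K_{m-m_i}},x)=f_{m-m_i,m_i}(x)$ from Theorem~\ref{summand}, and evaluating at $x=1$ with the Vandermonde identity $\sum_{k}\binom{m-1}{k}^2=\binom{2(m-1)}{m-1}$. No gaps; your write-up simply makes explicit what the paper leaves as ``From Theorems~\ref{mainthm} and \ref{summand}.''
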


\begin{Example}
From Corollary~\ref{mainCor}, the normalized volume of 
$\nabla_{K_{\ell,m}}^{\rm PQ}$ is
$$
f_{\ell,m}(1)+f_{m,\ell}(1) - 
\binom{2(\ell+m-1)}{\ell + m-1}.
$$
From Example~\ref{exaofexa}, 
we have
\begin{eqnarray*}
{\rm Vol} (\nabla_{K_{1,m}}^{\rm PQ})&=& 2^m,\\
{\rm Vol} (\nabla_{K_{2,m}}^{\rm PQ})&=& 2^{m-2} \left(m^2+3 m+8\right) -2,\\
{\rm Vol} (\nabla_{K_{3,m}}^{\rm PQ})&=& \frac{2^{m-4} }{(2!)^2} \left(m^4+10 m^3+59 m^2+186 m+384\right)-(6m+6),\\
{\rm Vol} (\nabla_{K_{4,m}}^{\rm PQ}) &=& \frac{2^{m-6}}{(3!)^2}  \left(m^6+21 m^5+229 m^4+1563 m^3+7762 m^2+24984 m+46080\right)\\
&&-(10m^2+24m +20).
\end{eqnarray*}
The formula for ${\rm Vol} (\nabla_{K_{2,m}}^{\rm PQ})$ coincides with 
that in \cite[Proposition 4.2]{DC}.
\end{Example}

\begin{Example}
Let $G$ be the complete bipartite graph $K_{2,n-2}$.
Since 
\begin{eqnarray*}
f_{2,m}(x)
&=&
\sum_{k=0}^{m+1} 
\sum_{\alpha =0}^{k} 
\binom{1}{k-\alpha}
\binom{m}{\alpha}
\sum_{\beta=\alpha}^{k}
\binom{\alpha+1}{\beta}
\binom{m-\alpha}{k-\beta}x^k\\
&=&
\sum_{k=0}^{m+1} 
\binom{m}{k-1}
k
(m-k+1)x^k
+
\sum_{k=0}^{m+1} 
\binom{m}{k-1}
x^k
+
\sum_{k=0}^{m+1} 
\binom{m}{k}
(k+1)
x^k\\
&=&
\sum_{k=0}^{m} 
\binom{m}{k}
k^2
x^k
+
\sum_{k=0}^{m} 
\binom{m}{k}
x^{k+1}
+
\sum_{k=0}^{m} 
\binom{m}{k}
(k+1)
x^k\\
&=&
\sum_{k=0}^{m} 
\binom{m}{k}
(x+1+k^2+k)
x^k\\
&=&
(x+1)^{m+1} + m  \left( (m + 1)x +2 \right) x(x+1)^{m-2} 
\end{eqnarray*}
and
\begin{eqnarray*}
f_{\ell,2}(x)
&=&
\sum_{k=0}^{\ell+1} 
\binom{\ell+1}{k}^2 x^k
-2x,
\end{eqnarray*}
we have
\begin{eqnarray*}
h^*(\nabla_G^{\rm PQ},x)  &=& f_{2,n-2}(x) + f_{n-2,2}(x) - \sum_{k=0}^{n -1} 
\binom{n-1}{k}^2
x^k\\
&=& (x+1)^{n-1} +  (n-2) \left( (n- 1)x +2 \right)x (x+1)^{n-4} -2x.
\end{eqnarray*}
\end{Example}

\section{Wheel graphs}
\label{sec:wheel}

For $n\ge 3$, the {\em wheel graph} $W_n$ with $n+1$ vertices
is the join graph $W_n = C_n + K_1$.
Unfortunately, Theorem~\ref{mainthm} is not useful for computing
the $h^*$-polynomial of $\nabla_{W_n}^{\rm PQ}$.
We will give an explicit formula for the $h^*$-polynomial of $\nabla_{W_n}^{\rm PQ}$
and prove the conjecture \cite[Conjecture~4.4]{DC} on the normalized volume of
$\nabla_{W_n}^{\rm PQ}$ by using Proposition~\ref{sus_matchable} on $\nabla_{G+K_1}^{\rm PQ}$.

Let
$$
\gamma(n,x)=
\left\{
\begin{array}{ll}
1 & \mbox{if } n=0,\\
\\
 \frac{
(1+\sqrt{1+8x})^n + (1-\sqrt{1+8x})^n
}{2^n} & \mbox{if } n \mbox{ is odd,}\\
\\
 \frac{
(1+\sqrt{1+8x})^n + (1-\sqrt{1+8x})^n
}{2^n}  -2 x^{\frac{n}{2} } & \mbox{otherwise}.
\end{array}
\right.
$$
For $n\ge 3$, 
$$
\frac{
(1+\sqrt{1+8x})^n + (1-\sqrt{1+8x})^n
}{2^n} = g(C_n, 2x),
$$
where $g(C_n,x)$ is the matching generating polynomial of $C_n$.
Moreover, for $n\ge 3$, 
it is known \cite[Example~4.5]{saboten} that, 
$\gamma(n,x)$ is the $\gamma$-polynomial of 
the PV-type adjacency polytope $\nabla^{\rm PV}_{W_n}$ of $W_n$.
(Note that $\nabla^{\rm PV}_{W_n}$ is called {\em the symmetric edge polytope of type A} of $W_n$ in \cite{saboten}.)
The $h^*$-polynomial of $\nabla_{W_n}^{\rm PQ}$ is described by this function as follows.

\begin{Theorem}
\label{thm:wheel}
The $h^*$-polynomial of $\nabla_{W_n}^{\rm PQ}$ is 
\begin{eqnarray*}
 & & \sum_{k=0}^n \binom{n}{k} \gamma(n-k,x) x^k\\
&=&
\left(\frac{1+2x+\sqrt{1+8x}}{2}\right)^n +
\left(\frac{1+2x-\sqrt{1+8x}}{2}\right)^n 
+x^n
-
(x+\sqrt{x})^n 
- (x-\sqrt{x})^n.
\end{eqnarray*}
Moreover, the normalized volume of $\nabla_{W_n}^{\rm PQ}$ is $3^n-2^n+1$.
\end{Theorem}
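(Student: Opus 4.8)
The plan is to pass to a perfectly matchable set polynomial and then sum it in closed form. Since $W_n = C_n + K_1$, Proposition~\ref{sus_matchable} gives $h^*(\nabla_{W_n}^{\rm PQ},x) = \PMS(D(C_n),x)$, so the whole computation reduces to the PMS polynomial of $D(C_n)$. In $D(C_n)$ each unbarred vertex $i$ is joined exactly to the three consecutive barred vertices $\overline{i-1},\overline i,\overline{i+1}$ (indices read cyclically), and symmetrically for the barred vertices; equivalently its biadjacency matrix is the circulant with support $\{-1,0,1\}$. A perfectly matchable set is then a balanced pair $A \sqcup B$ (with $A\subseteq[n]$, $B\subseteq[\overline n]$, $|A|=|B|$) whose induced subgraph has a perfect matching, and by Proposition~\ref{prop:PMS_inequalities} this is controlled by the inequalities $\sum_{i\in S}x_i\le\sum_{\overline j\in\Gamma(S)}x_j$ in which every neighborhood $\Gamma(S)$ is a union of length-three cyclic intervals.

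The heart of the argument is the identity $\PMS(D(C_n),x)=\sum_{k=0}^n\binom nk\gamma(n-k,x)\,x^k$. Because the neighborhoods are short cyclic intervals, I would enumerate perfectly matchable sets by walking once around the cycle with a transfer matrix (equivalently, by a linear recursion in $n$) whose state records only the local matching behaviour at each index $i$: whether $i\in A$ and $\overline i\in B$, and whether a matching edge crosses from cell $i$ to cell $i+1$. An index $i$ matched vertically to $\overline i$ contributes a factor $x$, which accounts for the factor $\binom nk x^k$: choosing the $k$ positions carrying such a vertical pair. On the remaining $n-k$ cyclic positions the matchable configurations are counted by a two-term recurrence whose characteristic roots are the roots $\tfrac{1\pm\sqrt{1+8x}}2$ of $t^2-t-2x=0$; this is exactly why the cycle matching generating function $g(C_{n-k},2x)$, and hence $\gamma(n-k,x)$, appears. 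The delicate points are the cyclic boundary conditions (closing up the transfer matrix as a trace $a^{m}+b^{m}$) and the even-cycle corrections built into $\gamma$: a fully matched even sub-cycle admits two perfect matchings with the same underlying vertex set, so it must be counted once rather than twice, producing the $-2x^{m/2}$ term, while the all-vertical configuration forces the special value $\gamma(0,x)=1$. Getting these corrections exactly right is the main obstacle.

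Granting this identity, the closed form is routine algebra. Writing $a=\tfrac{1+\sqrt{1+8x}}2$ and $b=\tfrac{1-\sqrt{1+8x}}2$, one has $\gamma(m,x)=a^m+b^m+c(m)$, where the correction $c(m)$ equals $-1$ for $m=0$, equals $-2x^{m/2}$ for even $m\ge2$, and vanishes for odd $m$. The binomial theorem gives $\sum_{k=0}^n\binom nk(a^{n-k}+b^{n-k})x^k=(a+x)^n+(b+x)^n=\bigl(\tfrac{1+2x+\sqrt{1+8x}}2\bigr)^n+\bigl(\tfrac{1+2x-\sqrt{1+8x}}2\bigr)^n$, which yields the first two terms of the asserted formula. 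Summing the corrections and using $(x+\sqrt x)^n+(x-\sqrt x)^n=2\sum_j\binom n{2j}x^{n-j}$ collapses $\sum_{k}\binom nk c(n-k)x^k$ to $x^n-(x+\sqrt x)^n-(x-\sqrt x)^n$, completing the evaluation.

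Finally, the normalized volume is $h^*(\nabla_{W_n}^{\rm PQ},1)$. Setting $x=1$ gives $\sqrt{1+8x}=3$, hence $a+x=3$, $b+x=0$, $x=1$ and $\sqrt x=1$, so the closed form evaluates to $3^n+0+1-2^n-0=3^n-2^n+1$, which settles \cite[Conjecture~4.4]{DC}.
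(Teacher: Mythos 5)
Your overall architecture coincides with the paper's: reduce via Proposition~\ref{sus_matchable} to $\PMS(D(C_n),x)$, establish the identity $\PMS(D(C_n),x)=\sum_{k=0}^n\binom{n}{k}\gamma(n-k,x)x^k$, and then do the binomial-theorem algebra and the $x=1$ evaluation. The algebraic closing steps and the volume computation are correct. The gap is in the central identity, which you only sketch, and the sketch as stated would not work. A transfer matrix whose state records ``whether a matching edge crosses from cell $i$ to cell $i+1$'' enumerates \emph{matchings} (or pairs consisting of a vertex set together with a chosen perfect matching of it), whereas $\PMS(D(C_n),x)$ counts \emph{vertex sets}. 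These differ by more than the single even-cycle correction you build into $\gamma$. Concretely, the set $\{i,i+1,\overline{i},\overline{i+1}\}$ induces a $4$-cycle in $D(C_n)$ and admits two perfect matchings, one using two vertical edges $\{i,\overline{i}\},\{i+1,\overline{i+1}\}$ and one using two crossing edges; your scheme would count this set once in the $k=2$ class and once more in the $k=0$ class, and the $-2x^{m/2}$ term (which only corrects the two globally alternating configurations on a fully matched even sub-cycle) does not repair this. The underlying issue is that ``position $i$ carries a vertical pair'' is not a function of the matching you are transferring over; it must be defined from the vertex set alone.

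The paper avoids this by partitioning the perfectly matchable sets according to $T=\{i\in[n]:x_i=y_i=1\}$, which depends only on the vertex set, and then proving that for $|T|=k$ the restriction of the configuration to the complementary positions is exactly a perfectly matchable set of the smaller cycle graph $D(C_{n-k})$ obtained by closing up the gaps (Cases 4.1 and 4.2 of the paper's proof, which require a genuine matching-surgery argument). Only then does the count reduce to $2^{|M|}$ over matchings $M$ of $C_{n-k}$, with a single $-2$ correction for the two alternating vectors, yielding $\gamma(n-k,x)x^k$ and the factor $\binom{n}{k}$ from the choice of $T$. This contraction step --- that deleting the $T$-positions leaves something equivalent to a single shorter cycle, uniformly in where the positions of $T$ sit --- is precisely what your proposal asserts without proof, and it is the main content of the theorem. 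To salvage your approach you would either need to reproduce this reduction, or redesign the transfer matrix so that its states encode set membership together with enough information to certify matchability (the Hall-type inequalities of Proposition~\ref{prop:PMS_inequalities}) without reference to a chosen matching.
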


\begin{proof}
Since $W_n = C_n + K_1$, the $h^*$-polynomial of $\nabla_{W_n}^{\rm PQ}$ is 
$$
\PMS (D(C_n),x) =
\sum_{\ell = 0}^n|\pmset(D(C_n),\ell)| x^\ell$$ by Proposition~\ref{sus_matchable}.
From Proposition~\ref{prop:PMS_inequalities}, $|\pmset(D(C_n),\ell)|$
is equal to
the number of $(0,1)$-vectors
$(x_1,\ldots,x_n, y_1,\ldots,y_n) \in \RR^{2n}$ such that
\begin{eqnarray}
\sum_{i =1}^{n} x_i &=& \sum_{j=1}^{n} y_j = \ell, \label{wa2}\\ 
\sum_{i \in S} x_i  &\le& \sum_{\overline{j} \in \Gamma_{G'}(S)} y_j \ \  \mbox{ for all } S \subset [n], \label{ue2}
\end{eqnarray}
where $V= [n] \cup [\overline{n}]$ is the set of vertices of $D(C_n)$.
Let $C_n=(1,2,\dots,n,1)$.
Given a subset $T \subset [n]$ and an integer $\ell\in [n]$,
let $\pmset_{T,\ell}$ denote the set of all $(0,1)$ vectors $(x_1,\ldots,x_n, y_1,\ldots,y_n) \in \RR^{2n}$
satisfying (\ref{wa2}), (\ref{ue2}), and $T = \{ i \in [n] : x_i = y_i =1 \}$.
Note that $\pmset_{T,\ell} = \emptyset$ if $\ell < |T|$.

Let $T \subset [n]$ with $|T| =k$.
We will show that $\sum_{\ell=k}^n |\pmset_{T,\ell}| x^\ell = \gamma(n-k,x) x^k$.

\bigskip

\noindent
{\bf Case 1} ($k=n$).
It is easy to see that 
$\pmset_{T,n}= \{(1,\dots,1)\}$ and $\pmset_{T,\ell}=\emptyset$ if $\ell \neq n$.
Note that $ \gamma(n-k,x) x^k =x^n$ if $k=n$.
Thus $\sum_{\ell=k}^n |\pmset_{T,\ell}| x^\ell = \gamma(n-k,x) x^k$.

\bigskip

\noindent
{\bf Case 2} ($k=n-1$).
Let $T=[n] \setminus \{i\}$ where $i \in [n]$.
It then follows that
$\pmset_{T,n-1}=\{ (1,\dots,1) - {\bf e}_i - {\bf e}_{n+i}\}$
and $\pmset_{T,\ell}=\emptyset$ if $\ell \neq n-1$.
Note that $\gamma(n-k,x) x^k =x^{n-1} $ if $k=n-1$.
Thus $\sum_{\ell=k}^n |\pmset_{T,\ell}| x^\ell = \gamma(n-k,x) x^k$.

\bigskip

\noindent
{\bf Case 3} ($k=n-2$).
Let $T=[n] \setminus \{i,j\}$ where $1 \le i<j\le n$.
Since (\ref{wa2}) holds, 
each element of $\pmset_{T,\ell}$ is
$$\alpha_1 = (1,\dots,1) - {\bf e}_i - {\bf e}_j - {\bf e}_{n+i}- {\bf e}_{n+j}$$
if $\ell=n-2$, and  
is one of
\begin{eqnarray*}
\alpha_2 &=& (1,\dots,1) - {\bf e}_i - {\bf e}_{n+j},\\
\alpha_3 &=& (1,\dots,1)  - {\bf e}_j - {\bf e}_{n+i},
\end{eqnarray*}
if $\ell= n-1$.
Then 
each $\alpha_i$ corresponds to a perfectly matchable set.
In fact, a matching of $D(C_n)$ which corresponds to $\alpha_1$, $\alpha_2$, $\alpha_3$ is
$$\{ \{s, \overline{s}\} : s \in [n] \setminus \{i,j\}  \},$$
$$\{ \{s, \overline{s}\} : 1 \le s \le i-1 \mbox{ or } j+1 \le s \le  n  \} 
\cup \{ \{ s+1 , \overline{s}\} : i \le s \le j-1  \},$$
$$\{ \{s, \overline{s}\} : 1 \le s \le i-1 \mbox{ or } j+1 \le s \le  n  \} 
\cup \{ \{ s , \overline{s+1}\} : i \le s \le j-1  \},$$
respectively.
Thus
$$
\pmset_{T,\ell}=
\left\{
\begin{array}{cc}
\{\alpha_1\} & \mbox{if } \ell=n-2,\\
\\
\{\alpha_2, \alpha_3\}  & \mbox{if } \ell=n-1,\\
\\
\emptyset & \mbox{otherwise}.
\end{array}
\right.
$$
Note that $\gamma(n-k,x) x^k = (2x+1)  x^{n-2} =x^{n-2} + 2 x^{n-1}$ if $k=n-2$.
Thus $\sum_{\ell=k}^n |\pmset_{T,\ell}| x^\ell = \gamma(n-k,x) x^k$.

\bigskip

\noindent
{\bf Case 4} ($k\le n-3$).
Suppose that $T'=\{p,p+1,\dots,q\} \subset T$ and $p-1 , q+1 \notin T$.
Since $|T|\le n-3$, we have $n-q+p-1 \ge3$.
Let $U=\{ p,p+1,\dots,q, \overline{p}, \overline{p+1},\dots,\overline{q}\}$.
We will show that
there exists a perfectly matchable set $S$ of $D(C_{n-q+p-1})$ such that
$$\rho(S) =(x_1, \ldots, x_{p-1},x_{q+1},\dots,x_n,  y_1, \ldots, y_{p-1},y_{q+1},\dots,y_n)$$
for any $(x_1,\dots,x_n,y_1,\dots,y_n) \in \pmset_{T,\ell}$.
Here $C_{n-q+p-1}=(1,\ldots,p-1,q+1,\ldots,n,1)$
and the vertex set of $D(C_{n-q+p-1})$ is $\{1,\ldots,p-1,q+1,\ldots,n\} \cup \{\overline{1},\ldots,\overline{p-1},\overline{q+1},\ldots,\overline{n}\}$.
Let $M$ be a matching of $D(C_n)$ which corresponds to $(x_1,\dots,x_n,y_1,\dots,y_n)$.

\bigskip

\noindent
{\bf Case 4.1} (either $x_{p-1}=x_{q+1}=0$ or $y_{p-1}=y_{q+1}=0$).
Exchanging $[n]$ and $[\overline{n}]$ if needed, we may assume that 
$y_{p-1}=y_{q+1}=0$.
Then, for the subset $T' \subset [n]$,
$$
\sum_{i \in T'} x_i = \sum_{\overline{j} \in \Gamma_{G'}(T')} y_j = |T'|.
$$
Hence
the matching $M$ is the union of
a perfect matching of the induced subgraph $D(C_n)_U$ of $D(C_n)$
and a matching $M'$ of 
the induced subgraph $D(C_n)_{V\setminus U}$ of $D(C_n)$.
Then one can regard $M'$ as a matching of $D(C_{n-q+p-1})$ since $D(C_n)_{V\setminus U}$ is a subgraph of $D(C_{n-q+p-1})$.

\bigskip

\noindent
{\bf Case 4.2} ($x_{p-1}=y_{q+1} \neq y_{p-1}=x_{q+1}$).
Exchanging $[n]$ and $[\overline{n}]$ if needed, we may assume that 
$x_{p-1}=y_{q+1} =1$ and $y_{p-1}=x_{q+1}=0$.
If $e = \{q+2, \overline{q+1} \}$ belongs to the matching $M$,
then $M \setminus \{e\}$ is the union of
a perfect matching of $D(C_n)_U$ and a matching $M'$ of $D(C_n)_{V\setminus U}$
by the same argument in Case~4.1.
Suppose that $e = \{q, \overline{q+1} \}$ belongs to the matching $M$.
It then follows that $M$ is the union of
$\{ \{p-1, \overline{p} \} , \ldots,   \{q-1, \overline{q} \}  , \{q, \overline{q+1}\}\}$ and 
a matching $M'$ of $D(C_n)_{V\setminus U}$.
Thus one can regard $M' \cup \{\{p-1, \overline{q+1} \} \}$ as a matching of $D(C_{n-q+p-1})$.

\bigskip

Thus there exists a perfectly matchable set $S_1$ of $D(C_{n-q+p-1})$ such that
$$\rho(S_1) =(x_1, \ldots, x_{p-1},x_{q+1},\dots,x_n,  y_1, \ldots, y_{p-1},y_{q+1},\dots,y_n)$$
for  $(x_1,\dots,x_n,y_1,\dots,y_n)$ if $T'=\{p,p+1,\dots,q\} \subset T$ and $p-1 , q+1 \notin T$.
If, in addition, $T''=\{p',p'+1,\dots,q'\} \subset T$ and $p'-1 , q'+1 \notin T$ for some $p' > q+1$, then
there exists a perfectly matchable set $S_2$ of $D(C_{n-(q-p+1) -(q'-p'+1) })$ such that
$$\rho(S_2) =(x_1, \ldots, x_{p-1},x_{q+1},\dots, x_{p'-1},x_{q'+1},\dots, x_n,  y_1, \ldots, y_{p-1},y_{q+1},\dots, y_{p'-1},y_{q'+1},\dots,y_n)$$
by the same argument as above.
Repeating the above argument,  
it follows that there exists a perfectly matchable set $S$ of $D(C_{n-k})$ such that
$\rho(S) =(x_{i_1}, \ldots, x_{i_{n-k}},  y_{i_1}, \ldots, y_{i_{n-k}})$
where $[n] \setminus T = \{i_1,\ldots,i_{n-k}\}$ and $x_{i_r} + y_{i_r} \le 1$ for all $r$.
Then there exists a perfectly matchable set $S'$ of $C_{n-k}$
such that $|S'| = 2(\ell-k)$ and $\rho(S') = (x_{i_1} + y_{i_1}, \ldots, x_{i_{n-k}} + y_{i_{n-k}})$.
The matching corresponding to $S'$ is not unique if and only if 
$n-k$ is even and $\rho(S')=(1,\dots,1)$.
There exist two matchings for
the perfectly matchable set $S$ of $D(C_{n-k})$ 
exactly when 
$$(x_{i_1}, \ldots, x_{i_{n-k}},  y_{i_1}, \ldots, y_{i_{n-k}})
=(1,0,\ldots,1,0, 0,1,\ldots, 0,1),
(0,1,\ldots,0,1,1,0, \ldots, 1,0).
$$
Conversely, for each matching $M$ of $C_{n-k}$,
there exist $2^{|M|}$ vectors $(x_{i_1}, \ldots, x_{i_{n-k}},  y_{i_1}, \ldots, y_{i_{n-k}})$
where $x_{i_r} + y_{i_r} \le 1$ for all $r$ and
associated with a perfectly matchable set of $D(C_{n-k})$
since there are two possibilities $(x_{i_p}, x_{i_{p+1}},y_{i_p}, y_{i_{p+1}})
=(1,0,0,1), (0,1,1,0)$ for each $\{i_p, i_{p+1}\} \in M$.

Hence we have
$$
|\pmset_{T,\ell}| =
\left\{
\begin{array}{ll}
2^{\ell-k} m_{C_{n-k}} (\ell-k) -2 & \mbox{if } 
\ell -k = (n-k)/2,\\ 
\\
2^{\ell-k} m_{C_{n-k}} (\ell-k)& \mbox{otherwise}.
\end{array}
\right.
$$
Recall that $m_{G} (k)$ is the number of $k$-matchings of a graph $G$.
Thus
\begin{eqnarray*}
\sum_{\ell=k}^n |\pmset_{T,\ell}| x^\ell
&=&
\sum_{\ell=k}^n
2^{\ell-k} m_{C_{n-k}} (\ell-k) x^\ell -2 x^{(n+k)/2} \\
&=& \left( \sum_{\ell=k}^n m_{C_{n-k}} (\ell-k)(2 x)^{\ell-k} -2 x^{(n-k)/2} \right) x^k\\
&=&\gamma(n-k,x) x^k
\end{eqnarray*}
if $n-k$ is even, and
\begin{eqnarray*}
\sum_{\ell=k}^n |\pmset_{T,\ell}| x^\ell
&=&
\sum_{\ell=k}^n
2^{\ell-k} m_{C_{n-k}} (\ell-k) x^\ell\\
&=&\left( \sum_{\ell=k}^nm_{C_{n-k}} (\ell-k)(2 x)^{\ell-k}\right) x^k\\
&=&\gamma(n-k,x) x^k
\end{eqnarray*}
if $n-k$ is odd.

Therefore the $h^*$-polynomial of $\nabla_{W_n}^{\rm PQ}$ is 
$$
\sum_{T \subset [n]}
\sum_{\ell=|T|}^n |\pmset_{T,\ell}| x^\ell
=
\sum_{T \subset [n]}
\gamma(n-|T|) x^{|T|}
=
\sum_{k=0}^n \binom{n}{k} \gamma(n-k,x) x^k.
$$
Moreover
\begin{eqnarray*}
 & & \sum _{k=0}^n \binom{n}{k} \gamma(n-k,x) x^k\\
&=&
x^n+\sum_{k=0}^n \binom{n}{k} 
\left(\frac{1+\sqrt{1+8x}}{2}\right)^{n-k} x^k+
\sum_{k=0}^n \binom{n}{k} \left(\frac{1-\sqrt{1+8x}}{2}\right)^{n-k}
x^k\\
& &
- \sum_{\substack{k\ge0\\ k \equiv n \ (\mbox{\small mod } 2)}} 2  \binom{n}{k} \sqrt{x}^{n-k} x^k
\\
&=&
\left(\frac{1+2x+\sqrt{1+8x}}{2}\right)^n +
\left(\frac{1+2x-\sqrt{1+8x}}{2}\right)^n 
+x^n
-
(x+\sqrt{x})^n 
- (x-\sqrt{x})^n.
\end{eqnarray*}
In particular, substituting $x=1$, we have
$$
\sum _{k=0}^n \binom{n}{k} \gamma(n-k,1)
=3^n-2^n+1.
$$

\end{proof}

\section*{Acknowledgement}
The authors were partially supported by JSPS KAKENHI 18H01134, 19K14505, and 19J00312.

\end{document}